\documentclass[12pt]{amsart}
\usepackage[english]{babel}
\usepackage[letterpaper,top=3cm,bottom=3cm,left=3.25cm,right=3.25cm,marginparwidth=1.75cm]{geometry}
\usepackage{amsmath}
\numberwithin{equation}{section}
\usepackage{amsthm}
\usepackage{amssymb}
\usepackage{mathtools}
\usepackage{graphicx}
\usepackage{algorithmic}
\usepackage{tikz}
\usetikzlibrary{arrows.meta,calc,decorations.pathmorphing}
\usepackage{float}
\usepackage{caption}
\DeclareCaptionLabelFormat{andtable}{#1~#2  \&  \tablename~\thetable}
\usepackage{subcaption}
\usepackage{booktabs}
\usepackage{multirow}
\usepackage[colorlinks=true, linkcolor=blue, citecolor=green, urlcolor=green]{hyperref}
\newcommand{\cc}{\mathrel{\subset\subset}}

\definecolor{waterFill}{RGB}{120,185,240}
\definecolor{waterEdge}{RGB}{18,94,185}
\definecolor{anodeBase}{RGB}{205,117,52}
\definecolor{anodeTop}{RGB}{219,151,95}
\definecolor{anodeDark}{RGB}{112,52,22}
\definecolor{cathodeBase}{RGB}{160,164,164}
\definecolor{cathodeDark}{RGB}{88,92,92}
\definecolor{ionOrange}{RGB}{196,86,0}

\newenvironment{paperfigure}[1][htbp]{\begin{figure}[#1]}{\end{figure}}
\newcounter{proofstep}
\newenvironment{step}[1]{%
  \stepcounter{proofstep}%
  \par\medskip\noindent\textbf{Step \theproofstep} (#1).\quad\ignorespaces%
}{\par\medskip}

\newtheorem{theoremp}{Theorem}[section]
\newenvironment{theorem}{\vspace{5pt}\begin{theoremp}}{\end{theoremp}\vspace{5pt}}
\newtheorem{propositionp}[theoremp]{Proposition}

\newtheorem{corollaryp}{Corollary}[section]
\newenvironment{corollary}{\vspace{5pt}\begin{corollaryp}}{\end{corollaryp}\vspace{5pt}}
\newtheorem{examplep}[theoremp]{Example}

\newtheorem{lemmap}[theoremp]{Lemma}
\newenvironment{lemma}{\vspace{5pt}\begin{lemmap}}{\end{lemmap}\vspace{5pt}}

\makeatletter
\newtheoremstyle{boldremark}% <name>
  {5pt}% <Space above>
  {5pt}% <Space below>
  {\normalfont}% <Body font>
  {}% <Indent amount>
  {\bfseries}% <Theorem head font> — changed from \itshape to \bfseries
  {.}% <Punctuation after theorem head>
  { }% <Space after theorem head>
  {}% <Theorem head spec>
\makeatother
\theoremstyle{boldremark}
\newtheorem{remark}{Remark}[section]

\makeatletter
\def\@adminfootnotes{%
  \let\@makefnmark\relax \let\@thefnmark\relax
  \ifx\@empty\@date\else \@footnotetext{\@setdate}\fi
  \ifx\@empty\thankses\else \@footnotetext{%
    \def\par{\let\par\@par}\@setthanks}%
  \fi
  \ifx\@empty\@subjclass\else \@footnotetext{\@setsubjclass}\fi
  \ifx\@empty\@keywords\else \@footnotetext{\@setkeywords}\fi
}
\makeatother

\title[Singular limit phenomenon in electrochemistry]{Singular limit phenomenon in a nonlinear elliptic model arising in electrochemistry}
\author[Daniel Fern\'andez]{Daniel Fern\'andez\textsuperscript{$\ast$}}
\thanks{\textsuperscript{$\ast$}Chair for Dynamics, Control, Machine Learning, and Numerics (Alexander von Humboldt Professorship), Department of Mathematics, Friedrich-Alexander-Universit\"at Erlangen-N\"urnberg, 91058 Erlangen, Germany.\newline
\emph{Email:} \texttt{daniel.fernandez@fau.de}.}
\subjclass[2020]{Primary 35B25, 35J65; Secondary 35B40, 35B65, 35J25}
\keywords{Nonlinear elliptic equations, nonlinear boundary conditions, asymptotic analysis, mixed boundary value problems}

\begin{document}

\begin{abstract}
We study a singularly perturbed harmonic problem, with nonlinear Neumann
boundary conditions of signed exponential type, arising in galvanic corrosion applications. We focus on the low-conductivity regime, which leads to a singular limit in which the solution presents a jump in the boundary. We prove convergence of the solution traces in the corresponding fractional Sobolev spaces, identify
the interior limit solution, and obtain a sharp logarithmic energy expansion for smooth
boundary junctions. The proofs rely mostly on trace Moser--Trudinger estimates for showing well-posedness and regularity, and on mollifier approximation techniques for the singular limit analysis. Numerical experiments test the predicted
convergence rate.
\end{abstract}

\maketitle

% !TEX root = ../main.tex
\section{Introduction}\label{sec:introduction}

We study the elliptic model that arises in galvanic corrosion when the electrolyte potential is
harmonic in the interior and the metal-electrolyte interface is governed by highly nonlinear Butler--Volmer current laws
\cite{BardFaulkner2001,NewmanThomasAlyea2004,Rubinstein1990}. Related elliptic corrosion models with
nonlinear boundary reactions appear, for instance, in
\cite{VogeliusXu1998,KavianVogelius2003,BhatMoskow2006}.

The unknown
$\phi_\kappa$ denotes the steady electric potential in a bounded electrolyte domain
$\Omega\subset\mathbb{R}^2$. The boundary is decomposed into two reaction boundary parts $\Gamma_a$, $\Gamma_c$, and
a nonempty zero-flux part $\Gamma_N$, and $\phi_\kappa$ solves
\begin{equation}\label{eq:intro_model_pde}
\begin{cases}
-\Delta \phi_\kappa = 0 & \text{in }\Omega,\\
\partial_\nu \phi_\kappa = 0 & \text{on }\Gamma_N,\\
\partial_\nu \phi_\kappa = -i_a(\phi_\kappa) / \kappa & \text{on }\Gamma_a,\\
\partial_\nu \phi_\kappa = -i_c(\phi_\kappa) / \kappa & \text{on }\Gamma_c,
\end{cases}
\end{equation}
where $\nu$ is the exterior unit normal and $\kappa>0$ is the electrolyte conductivity. The functions $i_a$ and $i_c$
are signed exponentials with distinct zeros $\phi_a$ and $\phi_c$. In corrosion terminology, $\Gamma_a$ and
$\Gamma_c$ are anodic and cathodic portions of the exposed interface, while $\Gamma_N$ represents an insulated or
nonreacting boundary part.

This model is meant to isolate the activation-controlled part of a galvanic cell in a regime where the electrolyte
conductivity is small relative to the interfacial exchange-current scale. Equivalently, after nondimensionalization,
small $\kappa$ may represent a poorly conducting electrolyte, a long current path, or comparatively fast boundary
charge-transfer kinetics. The analysis does not include concentration depletion, space-charge layers, evolving
corrosion fronts, or changes in which boundary portions are anodic and cathodic. Those effects are important in full
corrosion simulations \cite{femreview,volume,fem1,per,phase}, but here they are deliberately frozen in order to
identify the singular elliptic mechanism created by incompatible cathodic and anodic equilibrium potentials.

The main difficulty is the \emph{boundary singularity} generated by the cathode-anode boundary decomposition. In two
dimensions the exponential Neumann law lies in the \emph{trace-critical regime} governed by trace Moser--Trudinger
estimates \cite{Cianchi2008,LiLiu2005,Yang2006TraceTM,Yang2007SharpTraceTM}.

As $\kappa\to0^+$, the prefactor
$1/\kappa$ forces the trace on $\Gamma_a\cup\Gamma_c$ toward the two equilibrium values. If these limiting values
meet at finitely many points of $\overline{\Gamma_a}\cap\overline{\Gamma_c}$, the limiting boundary datum is
discontinuous, and the critical $H^{1/2}$ cost of approximating this jump produces \emph{logarithmic energy growth}
\cite{Mazya2011,hitch}.

The central result of the paper is the \emph{exact leading coefficient} of this growth.
The proof isolates the three ingredients behind this phenomenon: a stiff boundary primitive that selects the
equilibrium trace, a linear elliptic extension operator that propagates the trace into the interior, and the local
borderline capacity of a boundary step.

\subsection{Contributions}

The main results identify the singular limit generated by the reactive boundary structure.
\begin{itemize}
    \item \textbf{Boundary convergence and mixed harmonic selection.}
    Under the standing geometric assumptions \eqref{eq:boundary_decomp}--\eqref{eq:separation_condition}, the solution traces $\operatorname{Tr}\phi_\kappa$ from equation \eqref{eq:intro_model_pde} converge on the reaction boundary $\Gamma_c\cup\Gamma_a$ to the \emph{equilibrium profile}
    \begin{equation}\label{eq:equilibrium_profile}
    \Phi_0:=\phi_c\mathbf{1}_{\Gamma_c}+\phi_a\mathbf{1}_{\Gamma_a}
    \quad\text{on }\Gamma_c\cup\Gamma_a,
    \end{equation}
    and satisfy, for every $0\le s<1/2$, and sufficiently small $\kappa>0$,
    \[
    \|\operatorname{Tr}\phi_\kappa-\Phi_0\|_{H^s(\Gamma_c\cup\Gamma_a)}
    \le C_s\,\kappa^{1/2-s}|\log\kappa|^{1/2}.
    \]
    The solutions converge locally in the interior to the bounded mixed harmonic function selected by
    $\Phi_0$, with the quantitative interior rate inherited from the boundary estimate
    (Theorem~\ref{thm:boundary_harmonic_selection} and Corollary~\ref{cor:interior_selection}).

    \item \textbf{Exact smooth boundary-jump constant.}
    Let $J_\kappa$ be the energy functional associated with problem \eqref{eq:intro_model_pde}, and assume $\overline{\Gamma_a}\cap\overline{\Gamma_c}$ is a set of $N$ junction points. Then,
    \[
    J_\kappa(\phi_\kappa)
    =\frac{N(\phi_c-\phi_a)^2}{2\pi}|\log\kappa|
    +O(\log|\log\kappa|).
    \]

    Thus the low-conductivity limit is singular: the energy cannot remain bounded when adjacent boundary pieces try to
    impose two different equilibrium potentials. The solutions still select the cathodic and anodic equilibrium values
    away from the junctions, but the transition between them is compressed into boundary layers whose total leading
    cost is exactly the logarithmic term above (Theorem~\ref{thm:energy_scale}).

    \item \textbf{Numerical validation and mesh design.}
    The finite-element experiments focus on theorem-facing diagnostics: boundary and interior convergence, normalized
    energy, exact-constant refinement, mesh refinement guided by the boundary-layer scale near each jump, and corrugated
    tests with several jump points, using standard conforming finite-element discretizations. The same local balance
    identifies the heuristic \emph{reaction-layer mesh scale} $h_{\min}\simeq\kappa$ near each smooth jump point
    (Remark~\ref{rem:mesh_scale}).
\end{itemize}

\subsection{Related work}

\noindent We briefly situate the paper relative to corrosion models, exponential Neumann singular limits, and the
functional-analytic tools used below.

\noindent\textbf{Corrosion and nonlinear boundary models.}
The signed exponential boundary law is the Butler--Volmer current-overpotential relation
\cite{BardFaulkner2001,NewmanThomasAlyea2004}. Related nonlinear elliptic corrosion models address
well-posedness, singular exact solutions, boundary blow-up, and heterogeneous or periodic reactions
\cite{VogeliusXu1998,KavianVogelius2003,BhatMoskow2006}.
Those works provide the electrochemical and elliptic background, but not the low-conductivity mechanism in which a
stiff boundary energy selects incompatible adjacent equilibrium traces.

\noindent\textbf{Boundary concentration limits.}
Singular limits for exponential Neumann data often produce boundary concentration: large-amplitude solutions and
rescaled fluxes converging to Dirac masses
\cite{DavilaDelPinoMussoWei2006,GuoLiu2006}, or, in Steklov-type
variants, concentration along boundary geodesics \cite{PaganiPierottiPistoiaVaira2016}. Here the range bound prevents
blow-up, and the singular object is instead a discontinuous equilibrium trace. The leading cost is therefore a
fractional boundary-jump cost, not a concentration energy.

\noindent\textbf{Existence and regularity of the solution.}
The fixed-conductivity problem relies on trace Moser--Trudinger control for exponential boundary growth
\cite{Cianchi2008,Yang2007SharpTraceTM}. The singular limiting arguments also use mixed harmonic extension
theory \cite{Kenig1994,TaylorOttBrown2013} and standard Neumann shift results
\cite{Grisvard2011}. These tools supply compactness, coercivity, and interior smoothing, but not the
sharp low-conductivity energy constant.

\noindent\textbf{Boundary-jump cost.}
The local cost of smoothing a one-dimensional step is classical in fractional Sobolev theory and potential theory
\cite{Mazya2011,hitch}, and related logarithmic energies occur in Ginzburg--Landau settings
\cite{SandierSerfaty2007}. The contribution here is to show that this jump cost is
selected by the Butler--Volmer boundary functional and to compute its smooth-junction leading constant.

\subsection{Outline of the paper}

Section~\ref{sec:main-results} states the model and the main singular-limit results. Section~\ref{sec:preliminary-lemmas}
collects the preliminary estimates, and Section~\ref{sec:proofs} gives the proofs of the main theorems.
Section~\ref{sec:numerics} gives the numerical validation, and Section~\ref{sec:conclusion} summarizes the conclusions.
 % !TEX root = ../main.tex
\section{Main results}\label{sec:main-results}

This section fixes the mathematical setting and states the two singular-limit results proved later. The main point is
that the small conductivity parameter does not merely produce a large boundary reaction; it selects a discontinuous
equilibrium trace on the reactive boundary, and the cost of resolving the resulting cathode-anode jumps determines the
leading asymptotic energy.

\subsection{Model setup and geometric framework}\label{subsec:model-setup}

Throughout, $\Omega\subset\mathbb{R}^2$ is a bounded, connected domain that is either $C^{1,1}$ or a Lipschitz
curvilinear polygon. In the latter case, $\partial\Omega$ is a finite union of $C^{1,1}$ arcs meeting at a finite
corner set $\mathcal C_\Omega$, and every interior angle lies in $(0,2\pi)$.
The boundary is partitioned into cathodic, anodic, and insulated arcs:
\begin{equation}\label{eq:boundary_decomp}
\partial\Omega=\overline{\Gamma_c}\cup\overline{\Gamma_a}\cup\overline{\Gamma_N},
\end{equation}
where $\Gamma_c,\Gamma_a,\Gamma_N\subset\partial\Omega$ are pairwise disjoint relatively open finite unions of boundary
arcs, with $|\Gamma_c|,|\Gamma_a|,|\Gamma_N|>0$. The finitely many arc endpoints are
ignored in trace statements. We write the reactive boundary and the cathode-anode junction set as
\begin{equation}\label{eq:reactive_sets}
\Gamma_*:=\Gamma_c\cup\Gamma_a,
\qquad
\Sigma_*:=\overline{\Gamma_c}\cap\overline{\Gamma_a}.
\end{equation}

Here $\Sigma_*$ is the finite cathode-anode junction set. In the curvilinear polygonal case, cathode-anode junctions
are assumed to be smooth boundary points, not corners. These junctions are also assumed to stay away from the
zero-flux boundary:
\begin{equation}\label{eq:separation_condition}
\operatorname{dist}\!\left(\Sigma_*,\mathcal C_\Omega\right)>0,
\qquad
\operatorname{dist}\!\left(\Sigma_*,\overline{\Gamma_N}\right)>0.
\end{equation}
with $\mathcal C_\Omega=\emptyset$ in the $C^{1,1}$ case and the first condition then understood as vacuous.
Thus each cathode-anode junction has a reactive boundary neighborhood, while the global problem contains insulated
arcs or curvilinear corners elsewhere.

In this geometric setting the steady electric potential satisfies equation \eqref{eq:intro_model_pde}, where $\kappa>0$ is the
electrolyte conductivity and the Neumann boundary data are given by the signed exponential currents
\begin{equation}\label{i2}
\begin{cases}
    i_c(\phi)=i_{c,0}\left(\exp(C_1(\phi-\phi_c))-\exp(-C_2(\phi-\phi_c))\right),\\
    i_a(\phi)=i_{a,0}\left(\exp(A_2(\phi-\phi_a))-\exp(-A_1(\phi-\phi_a))\right).
\end{cases}
\end{equation}

Here $i_{c,0},i_{a,0},C_1,C_2,A_1,A_2$ are positive constants and $\phi_c>\phi_a$ are the two equilibrium potentials.
The currents are strictly increasing and vanish at their own equilibria. Consequently, when the prefactor
$1/\kappa$ becomes large, the cathodic boundary energetically favors the value $\phi_c$ and the anodic boundary
favors $\phi_a$. If both types of arcs meet, these two preferred values are incompatible at the junction.
The boundary decomposition and the resulting cathode-anode junction geometry are illustrated in
Figure~\ref{fig:corrosion_illustration}, where the planar setting used in the analysis appears as a cross-section of
the three-dimensional corrosion configuration.

\begin{paperfigure}[t]
\centering
\begin{tikzpicture}[>=Latex]
\node[inner sep=0, anchor=south west] (threeD) at (0,0)
    {\resizebox{0.42\textwidth}{!}{\begin{tikzpicture}[
    x=0.78cm,
    y=0.78cm,
    line cap=round,
    line join=round,
    >=Latex,
    metaledge/.style={draw=black!38,line width=0.55pt},
    waterline/.style={draw=waterEdge,line width=0.9pt},
    transport/.style={waterEdge!90!black,line width=0.8pt,-{Latex[length=2.4mm,width=1.7mm]},opacity=0.88},
    dissolve/.style={ionOrange,line width=0.9pt,-{Latex[length=2.4mm,width=1.8mm]},opacity=0.95},
    pointer/.style={line width=0.7pt},
    lab/.style={font=\Large\rmfamily},
    mathlab/.style={font=\Large\rmfamily}
]

\path[fill=black,opacity=0.08]
  (0.25,-0.25) -- (10.4,-0.25) -- (11.85,0.68) -- (1.65,0.68) -- cycle;

% Gray slicing plane kept behind the object so the 3D scene stays unchanged.
\path[fill=black!14,fill opacity=0.42,even odd rule]
  (-0.95,0.06) rectangle (12.35,6.30)
  (0,0) -- (10,0) -- (11.4,0.95) -- (11.4,5.67)
  .. controls (10.35,5.76) and (9.85,5.45) .. (8.78,5.58)
  .. controls (7.72,5.71) and (7.02,5.42) .. (6.05,5.55)
  .. controls (4.92,5.70) and (4.25,5.42) .. (3.18,5.62)
  .. controls (2.52,5.75) and (1.92,5.76) .. (1.4,5.70)
  .. controls (1.05,5.84) and (0.52,4.91) .. (0,4.75)
  -- cycle;
\draw[black!68,line width=0.85pt] (-0.95,0.06) -- (-0.95,6.30) -- (12.35,6.30) -- (12.35,0.06);
\draw[black!68,line width=0.85pt] (-0.95,0.06) -- (0.00,0.06);
\draw[black!55,line width=0.72pt,dash pattern=on 2.0pt off 1.6pt] (0.00,0.06) -- (11.40,0.06);
\draw[black!68,line width=0.85pt] (11.40,0.06) -- (12.35,0.06);

\path[fill=anodeTop,metaledge]
  (0,1.8) -- (5,1.8) -- (6.4,2.75) -- (1.4,2.75) -- cycle;

\path[fill=cathodeBase!55,metaledge]
  (5,1.8) -- (10,1.8) -- (11.4,2.75) -- (6.4,2.75) -- cycle;

\path[fill=cathodeDark!78,metaledge]
  (10,0) -- (11.4,0.95) -- (11.4,2.75) -- (10,1.8) -- cycle;

\path[fill=anodeBase,metaledge]
  (0,0) -- (5,0) -- (5,1.8) -- (0,1.8) -- cycle;

\path[fill=cathodeBase,metaledge]
  (5,0) -- (10,0) -- (10,1.8) -- (5,1.8) -- cycle;

\path[fill=black,opacity=0.08]
  (10,0) -- (11.4,0.95) -- (11.4,2.75) -- (10,1.8) -- cycle;

\node[font=\huge\rmfamily,anchor=center] at (2.5,0.9) {anode};
\node[font=\huge\rmfamily,anchor=center] at (7.5,0.9) {cathode};

\draw[black!45,line width=0.65pt] (5,0) -- (5,1.8) -- (6.4,2.75);

\path[fill=waterFill,fill opacity=0.28,draw=waterEdge,draw opacity=0.80,line width=0.75pt]
  (0,1.8) -- (1.4,2.75)
  -- (1.4,5.70)
  .. controls (1.05,5.84) and (0.52,4.91) .. (0,4.75)
  -- cycle;

\path[fill=waterFill,fill opacity=0.36,draw=waterEdge,draw opacity=0.82,line width=0.75pt]
  (0,1.8) -- (10,1.8) --
  (10,4.72)
  .. controls (9.15,4.72) and (8.65,4.89) .. (7.75,4.77)
  .. controls (6.65,4.62) and (5.95,4.79) .. (5.10,4.68)
  .. controls (4.18,4.55) and (3.50,4.90) .. (2.55,4.83)
  .. controls (1.55,4.76) and (1.05,5.12) .. (0,4.75)
  -- cycle;

\path[fill=waterFill,fill opacity=0.32,draw=waterEdge,draw opacity=0.83,line width=0.75pt]
  (10,1.8) -- (11.4,2.75) --
  (11.4,5.67)
  .. controls (11.05,5.53) and (10.55,4.86) .. (10,4.72)
  -- cycle;

\path[fill=waterFill!80!white,fill opacity=0.58,draw=waterEdge,draw opacity=0.96,line width=0.95pt]
  (0,4.75)
  .. controls (1.05,5.12) and (1.55,4.76) .. (2.55,4.83)
  .. controls (3.50,4.90) and (4.18,4.55) .. (5.10,4.68)
  .. controls (5.95,4.79) and (6.65,4.62) .. (7.75,4.77)
  .. controls (8.65,4.89) and (9.15,4.72) .. (10,4.72)
  -- (11.4,5.67)
  .. controls (10.35,5.76) and (9.85,5.45) .. (8.78,5.58)
  .. controls (7.72,5.71) and (7.02,5.42) .. (6.05,5.55)
  .. controls (4.92,5.70) and (4.25,5.42) .. (3.18,5.62)
  .. controls (2.52,5.75) and (1.92,5.76) .. (1.4,5.70)
  -- cycle;

\draw[white,opacity=0.55,line width=0.55pt]
  (1.00,4.98) .. controls (1.65,5.17) and (2.15,4.90) .. (2.75,5.05);
\draw[white,opacity=0.50,line width=0.55pt]
  (3.20,5.13) .. controls (4.10,5.33) and (4.95,4.99) .. (5.70,5.16);
\draw[white,opacity=0.45,line width=0.55pt]
  (6.55,5.20) .. controls (7.20,5.02) and (7.80,5.30) .. (8.55,5.14);
\draw[white,opacity=0.45,line width=0.50pt]
  (8.85,5.35) .. controls (9.45,5.52) and (10.15,5.20) .. (10.82,5.42);

\draw[waterline] (0,1.8) -- (5,1.8) -- (10,1.8);
\draw[waterline,opacity=0.75] (10,1.8) -- (11.4,2.75);
\draw[waterline,opacity=0.55] (0,1.8) -- (1.4,2.75);

% Section traces on the fluid part of the cut.
\draw[black,line width=1.20pt,dash pattern=on 2.0pt off 1.6pt] (0.96,2.46) -- (0.96,5.41);
\draw[black,line width=1.25pt] (10.96,0.66) -- (10.96,2.46);
\draw[black,line width=1.25pt] (10.96,2.46) -- (10.96,5.38);
\draw[black,line width=1.25pt]
  (0.96,5.41)
  .. controls (2.01,5.78) and (2.51,5.42) .. (3.51,5.49)
  .. controls (4.46,5.56) and (5.14,5.21) .. (6.06,5.34)
  .. controls (6.91,5.45) and (7.61,5.28) .. (8.71,5.43)
  .. controls (9.61,5.55) and (10.11,5.38) .. (10.96,5.38);

\path[fill=anodeDark,opacity=0.82]
  (0.55,1.83)
  .. controls (0.75,2.12) and (1.08,1.98) .. (1.24,2.18)
  .. controls (1.42,2.42) and (1.73,2.00) .. (1.98,2.25)
  .. controls (2.20,2.47) and (2.52,2.04) .. (2.74,2.22)
  .. controls (3.00,2.45) and (3.45,2.05) .. (3.67,2.20)
  .. controls (3.45,1.96) and (3.08,1.90) .. (2.72,1.94)
  .. controls (2.30,1.98) and (1.98,1.70) .. (1.62,1.86)
  .. controls (1.26,2.03) and (0.88,1.68) .. (0.55,1.83) -- cycle;

\foreach \x/\y/\rx/\ry in {
  0.82/1.96/0.20/0.08,
  1.18/2.10/0.18/0.10,
  1.55/1.98/0.26/0.10,
  1.92/2.20/0.20/0.11,
  2.35/2.06/0.24/0.10,
  2.86/2.18/0.20/0.09,
  3.33/2.04/0.22/0.08
}{
  \path[fill=black,opacity=0.30] (\x,\y) ellipse [x radius=\rx,y radius=\ry];
  \path[fill=orange!35!white,opacity=0.45] ($(\x,\y)+(0.04,0.04)$) ellipse [x radius=0.45*\rx,y radius=0.35*\ry];
}

\draw[dissolve] (1.05,2.08) .. controls (1.10,2.55) and (1.05,2.95) .. (1.10,3.35);
\draw[dissolve] (1.58,2.08) .. controls (1.62,2.68) and (1.58,3.15) .. (1.60,3.78);
\draw[dissolve] (2.10,2.12) .. controls (2.10,2.55) and (2.06,2.92) .. (2.12,3.18);
\draw[dissolve] (2.70,2.12) .. controls (2.72,2.72) and (2.72,3.15) .. (2.76,3.52);

\foreach \x/\y in {0.98/3.70,1.55/4.08,2.22/3.66,2.68/4.03,1.90/3.25}
  \fill[ionOrange] (\x,\y) circle[radius=0.07];

\draw[transport] (3.45,3.85) .. controls (4.90,3.15) and (6.55,3.08) .. (8.20,3.42);
\draw[transport] (3.25,3.15) .. controls (4.75,2.90) and (6.75,2.88) .. (8.78,3.12);
\draw[transport] (3.40,2.48) .. controls (5.05,2.38) and (6.55,2.38) .. (7.88,2.58);

\node[font=\Huge\rmfamily,text=waterEdge!85!black] at (6.28,3.98) {$\Omega$};

\node[mathlab,text=anodeDark!90!black] (Ga) at (3.05,2.93) {$\Gamma_a$};
\draw[anodeDark!90!black,pointer] (Ga.south west) -- (2.72,2.05);
\fill[anodeDark!90!black] (2.72,2.05) circle[radius=0.07];

\node[mathlab,text=cathodeDark!95!black] (Gc) at (8.75,2.75) {$\Gamma_c$};
\draw[cathodeDark!95!black,pointer] (Gc.south west) -- (7.98,1.84);
\fill[cathodeDark!95!black] (7.98,1.84) circle[radius=0.07];

\node[lab,text=black] (GN) at (10.10,6.45) {$\Gamma_N$};
\draw[black,line width=0.85pt,-{Latex[length=3.2mm,width=2.0mm]}] (GN.south west) -- (9.18,5.34);

\end{tikzpicture}
 }};
\node[inner sep=0, anchor=west] (twoD) at ([xshift=1.45cm]threeD.east)
    {\resizebox{0.42\textwidth}{!}{\begin{tikzpicture}[
    x=0.84cm,
    y=0.84cm,
    line cap=round,
    line join=round,
    >=Latex,
    metaledge/.style={draw=black!38,line width=0.55pt},
    pointer/.style={line width=0.7pt},
    lab/.style={font=\Large\rmfamily},
    mathlab/.style={font=\Large\rmfamily}
]

\path[fill=black,opacity=0.07]
  (0.18,-0.18) rectangle (10.18,0.12);

\path[fill=anodeBase,metaledge]
  (0,0) rectangle (5,1.15);
\path[fill=cathodeBase,metaledge]
  (5,0) rectangle (10,1.15);
\draw[black!45,line width=0.6pt] (5,0) -- (5,1.15);

\path[fill=waterFill,fill opacity=0.38,draw=waterEdge,draw opacity=0.92,line width=0.9pt]
  (0,1.15) -- (10,1.15) -- (10,4.16)
  .. controls (9.15,4.18) and (8.55,4.36) .. (7.62,4.23)
  .. controls (6.68,4.10) and (5.92,4.33) .. (5.00,4.20)
  .. controls (4.06,4.07) and (3.28,4.34) .. (2.30,4.24)
  .. controls (1.44,4.15) and (0.74,4.32) .. (0,4.16)
  -- cycle;

\draw[anodeDark!90!black,line width=2.0pt] (0,1.15) -- (5,1.15);
\draw[cathodeDark!95!black,line width=2.0pt] (5,1.15) -- (10,1.15);

\draw[waterEdge,line width=0.95pt]
  (0,4.16)
  .. controls (0.74,4.32) and (1.44,4.15) .. (2.30,4.24)
  .. controls (3.28,4.34) and (4.06,4.07) .. (5.00,4.20)
  .. controls (5.92,4.33) and (6.68,4.10) .. (7.62,4.23)
  .. controls (8.55,4.36) and (9.15,4.18) .. (10,4.16);

\draw[white,opacity=0.55,line width=0.52pt]
  (1.10,4.03) .. controls (1.72,4.18) and (2.18,3.96) .. (2.78,4.08);
\draw[white,opacity=0.50,line width=0.52pt]
  (3.28,4.16) .. controls (4.02,4.35) and (4.82,4.00) .. (5.48,4.15);
\draw[white,opacity=0.45,line width=0.52pt]
  (6.32,4.18) .. controls (6.92,4.02) and (7.58,4.26) .. (8.22,4.14);

\node[font=\Large\rmfamily,anchor=center] at (2.5,0.575) {anode};
\node[font=\Large\rmfamily,anchor=center] at (7.5,0.575) {cathode};
\node[font=\Huge\rmfamily,text=waterEdge!85!black] at (5.0,2.55) {$\Omega$};
\node[font=\Large\rmfamily] at (5.0,4.82) {air};

\node[mathlab,text=anodeDark!90!black] (Ga) at (2.10,1.86) {$\Gamma_a$};
\draw[anodeDark!90!black,pointer] (Ga.south) -- (2.65,1.19);
\fill[anodeDark!90!black] (2.65,1.19) circle[radius=0.06];

\node[mathlab,text=cathodeDark!95!black] (Gc) at (7.95,1.86) {$\Gamma_c$};
\draw[cathodeDark!95!black,pointer] (Gc.south) -- (7.35,1.19);
\fill[cathodeDark!95!black] (7.35,1.19) circle[radius=0.06];

\node[lab,text=black] (GN) at (8.85,5.26) {$\Gamma_N$};
\draw[black,line width=0.8pt,-{Latex[length=2.8mm,width=1.8mm]}] (GN.south west) -- (8.18,4.28);

\end{tikzpicture}
 }};

\coordinate (sliceStart) at ([xshift=-0.18cm,yshift=-1.08cm]threeD.north east);
\coordinate (sliceEnd) at ([xshift=-0.18cm]twoD.west);
\draw[black,line width=1.35pt,-{Latex[length=4.0mm,width=2.4mm]}]
    (sliceStart) .. controls +(1.00cm,0.00cm) and +(-1.10cm,0.06cm) .. (sliceEnd);
\end{tikzpicture}
 \caption{Three-dimensional corrosion sketch with an indicated slicing plane and the corresponding two-dimensional
cross-section used in the model. The reduced planar domain keeps the reactive anodic and cathodic boundary pieces
$\Gamma_a,\Gamma_c$ and the insulated boundary part $\Gamma_N$ that enter the mixed boundary-value problem.}
\label{fig:corrosion_illustration}
\end{paperfigure}

\subsection{Singular-limit results}\label{subsec:singular-limit-theorems}

In this subsection, we state the singular limit results in the geometric setting fixed by
\eqref{eq:boundary_decomp}--\eqref{eq:separation_condition} together with the boundary-value problem
\eqref{eq:intro_model_pde}. The first theorem is a convergence result: it identifies the limiting boundary trace and
the harmonic interior solution selected by that trace.

The second theorem is a sharp energy result: it shows that, at
smooth cathode-anode junctions, the leading energy is the half-plane boundary contribution of one jump, multiplied by the
number of junctions. These conclusions separate the global compactness mechanism from the local boundary-layer
calculation.

For each $\kappa>0$, the weak formulation of \eqref{eq:intro_model_pde} is to find
$\phi_\kappa\in H^1(\Omega)$ such that
\begin{equation}\label{eq:weak_formulation}
\int_\Omega \nabla \phi_\kappa\cdot \nabla v\,dx
+\frac{1}{\kappa}\int_{\Gamma_c} i_c(\phi_\kappa)\,v\,ds
+\frac{1}{\kappa}\int_{\Gamma_a} i_a(\phi_\kappa)\,v\,ds=0
\end{equation}
for every $v\in H^1(\Omega)$. The basic solvability and regularity properties are recorded in
Lemma~\ref{lem:basic_solvability_regularity}; in particular, $\phi_\kappa$ is well defined and unique for every fixed
$\kappa>0$.

We use $H^{1/2}(\Gamma_*)$ as the restriction space of $H^{1/2}(\partial\Omega)$, equipped with the quotient trace norm
\[
\|g\|_{H^{1/2}(\Gamma_*)}:=
\inf\{\|G\|_{H^{1/2}(\partial\Omega)}:\ G|_{\Gamma_*}=g\}.
\]
For $0\le s<1/2$, $H^s(\Gamma_*)$ denotes the usual fractional Sobolev space on the finite union of boundary arcs,
defined in arclength coordinates; endpoint values are immaterial in this range. The step trace $\Phi_0$ defined in
\eqref{eq:equilibrium_profile} belongs to $H^s(\Gamma_*)$ for every $0\le s<1/2$.

For $g\in H^{1/2}(\Gamma_*)$, $H_*g$ denotes the unique function $u\in H^1(\Omega)$ with
$\operatorname{Tr}u=g$ on $\Gamma_*$ such that
\[
\int_\Omega \nabla u\cdot\nabla v\,dx=0\qquad
\text{for all }v\in H^1(\Omega)\text{ with }\operatorname{Tr}v=0\text{ on }\Gamma_*.
\]
Thus $H_*g$ is the mixed harmonic extension with homogeneous Neumann data on $\Gamma_N$.

We now state the two main results of the paper.

\begin{theorem}[Boundary convergence in subcritical trace norms]\label{thm:boundary_harmonic_selection}
Assume the geometric setting \eqref{eq:boundary_decomp}--\eqref{eq:separation_condition}. Let Tr$\phi_\kappa$ be the solution trace of problem \eqref{eq:intro_model_pde}, and let $\Phi_0$ be the step function \eqref{eq:equilibrium_profile}. For every $0\le s<1/2$,
there is $C_s>0$ such that, for all sufficiently small
$\kappa>0$,
\begin{equation}\label{eq:hs_boundary_convergence}
\|\operatorname{Tr}\phi_\kappa-\Phi_0\|_{H^s(\Gamma_*)}
\le
C_s\,\kappa^{1/2-s}|\log\kappa|^{1/2},
\end{equation}

Thus $\operatorname{Tr}\phi_\kappa\to\Phi_0$ in $H^s(\Gamma_*)$ for every $0\le s<1/2$. The constants depend only on the domain,
the boundary decomposition, the equilibrium values $\phi_a,\phi_c$, the fixed boundary-reaction parameters, and on $s$.
\end{theorem}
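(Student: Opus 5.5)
Since each $i_\bullet$ is strictly increasing, the weak formulation \eqref{eq:weak_formulation} is the Euler--Lagrange equation of the convex functional
\[
J_\kappa(u)=\frac12\int_\Omega|\nabla u|^2\,dx+\frac1\kappa\int_{\Gamma_c}F_c(u)\,ds+\frac1\kappa\int_{\Gamma_a}F_a(u)\,ds,
\qquad F_\bullet(t)=\int_{\phi_\bullet}^t i_\bullet(\tau)\,d\tau\ge0 .
\]
Hence $\phi_\kappa$ is its minimizer, which we take from Lemma~\ref{lem:basic_solvability_regularity}. The plan is to combine three ingredients. First, an upper bound $J_\kappa(\phi_\kappa)\le C|\log\kappa|$ obtained from a competitor. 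Second, the coercivity $F_\bullet(t)\ge c\min(|t-\phi_\bullet|^2,|t-\phi_\bullet|)$, together with an $L^\infty$ range bound on the trace. Third, interpolation between an $L^2$ estimate of order $\kappa^{1/2}|\log\kappa|^{1/2}$ and an $H^{1/2}$ estimate of order $|\log\kappa|^{1/2}$. We first establish the range bound $\phi_a\le\phi_\kappa\le\phi_c$ by truncation: testing with $(\phi_\kappa-\phi_c)^+$ gives $\int|\nabla(\phi_\kappa-\phi_c)^+|^2\le0$ because $i_c,i_a>0$ above $\phi_c$. Since $\Gamma_*$ has positive length, $(\phi_\kappa-\phi_c)^+$ is a constant whose boundary term must vanish, so this part is zero. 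The lower bound follows symmetrically. On this bounded range $F_\bullet(t)\ge c|t-\phi_\bullet|^2$ holds with constants independent of $\kappa$.

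For the competitor we take $\psi_\kappa$ equal to $\phi_c$ near $\Gamma_c$ and $\phi_a$ near $\Gamma_a$ away from $\Sigma_*$. Near each junction $p\in\Sigma_*$ we use local boundary-flattening coordinates, possible by \eqref{eq:separation_condition}, and define $\psi_\kappa$ as the angular interpolation $\phi_a+(\phi_c-\phi_a)\theta/\pi$ on the annulus $\kappa<|x-p|<r_0$. Inside the ball of radius $\kappa$ we use a bounded Lipschitz cutoff, and away from junctions we glue with smooth cutoffs. The Dirichlet energy is then $\frac{(\phi_c-\phi_a)^2}{\pi}\log(r_0/\kappa)+O(1)$ per junction. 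The boundary term is nonzero only on a set of length $O(\kappa)$ with bounded integrand, so it contributes $\kappa^{-1}\cdot O(\kappa)=O(1)$. Therefore
\[
\tfrac12\|\nabla\phi_\kappa\|_{L^2}^2+\tfrac{c}{\kappa}\|\operatorname{Tr}\phi_\kappa-\Phi_0\|_{L^2(\Gamma_*)}^2\le J_\kappa(\phi_\kappa)\le J_\kappa(\psi_\kappa)\le C|\log\kappa|,
\]
which gives the case $s=0$: $\|\operatorname{Tr}\phi_\kappa-\Phi_0\|_{L^2(\Gamma_*)}\le C\kappa^{1/2}|\log\kappa|^{1/2}$.

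For $0<s<1/2$ the difficulty is that $\Phi_0\notin H^{1/2}(\Gamma_*)$, so we cannot interpolate the difference $\operatorname{Tr}\phi_\kappa-\Phi_0$ directly. Instead we compare with a regularized step. Let $\Phi_\kappa:=\operatorname{Tr}\psi_\kappa|_{\Gamma_*}$ (equivalently, a mollification of $\Phi_0$ at scale $\kappa$ near $\Sigma_*$) and split
\[
\operatorname{Tr}\phi_\kappa-\Phi_0=(\operatorname{Tr}\phi_\kappa-\Phi_\kappa)+(\Phi_\kappa-\Phi_0).
\]
The second term is a bounded function, of size $|\phi_c-\phi_a|$, supported on $O(\kappa)$-neighborhoods of the junctions. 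An explicit computation with the Gagliardo seminorm of a step smoothed at scale $\kappa$ (a single jump of width $\kappa$) gives
\[
\|\Phi_\kappa-\Phi_0\|_{H^s}\le C_s\kappa^{1/2-s}\qquad\text{for } s<\tfrac12.
\]
For the first term, $w_\kappa:=\operatorname{Tr}\phi_\kappa-\Phi_\kappa$ satisfies $\|w_\kappa\|_{L^2}\le C\kappa^{1/2}|\log\kappa|^{1/2}$ by the previous step, since $\|\Phi_\kappa-\Phi_0\|_{L^2}=O(\kappa^{1/2})$. It also satisfies $\|w_\kappa\|_{H^{1/2}(\Gamma_*)}\le C(\|\phi_\kappa\|_{H^1}+\|\psi_\kappa\|_{H^1})\le C|\log\kappa|^{1/2}$ by the trace theorem. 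To bound the full $H^1$ norm here we use the mean-value control from the range bound. Interpolating $[L^2,H^{1/2}]_{2s}$ on the finite union of arcs, using an extension operator to $\partial\Omega$ if needed, yields
\[
\|w_\kappa\|_{H^s}\le C\|w_\kappa\|_{L^2}^{1-2s}\|w_\kappa\|_{H^{1/2}}^{2s}\le C\kappa^{1/2-s}|\log\kappa|^{1/2}.
\]
Adding the two terms proves \eqref{eq:hs_boundary_convergence}.

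I expect the main obstacle to be the construction of the competitor and the uniformity of its energy bound: the flattening near junctions, the gluing to the constant states, and compatibility with the corners and $\Gamma_N$, all with $O(1)$ errors. The sharp constant is not needed here, only $O(|\log\kappa|)$, which makes this step more forgiving. A secondary point is verifying the interpolation inequality on $\Gamma_*$ for the restriction-space norm.
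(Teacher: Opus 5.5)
Your proposal is correct and follows the paper's proof. A competitor at scale $\kappa$ together with the quadratic coercivity of the primitives gives the $L^2$ rate and the $O(|\log\kappa|)$ energy bound, and the subcritical case follows by splitting off a $\kappa$-regularized step and interpolating between $L^2$ and $H^{1/2}$. The only variation is the competitor: you build it directly in the interior from the angular profile, which is the construction the paper uses for the upper bound in Theorem~\ref{thm:energy_scale}, rather than extending a mollified boundary step by $H_*$. This gives the $H^{1/2}$ bound on the regularized step from the trace theorem, without the Gagliardo computation or the mixed-extension estimate.
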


The $s=0$ boundary estimate propagates to compact subsets through interior estimates for the corresponding mixed harmonic
extension.

\begin{corollary}[Interior convergence]\label{cor:interior_selection}
Under the hypotheses of Theorem~\ref{thm:boundary_harmonic_selection}, let
$u_0$ be the bounded mixed harmonic extension of the boundary values $\Phi_0$ on $\Gamma_*$, with homogeneous Neumann
data on $\Gamma_N$. Then, for every compact set $K\cc\Omega$ and every integer $m\ge0$, there is
$C_{K,m}>0$ such that, for all sufficiently small $\kappa>0$,
\[
\|\phi_\kappa-u_0\|_{C^m(K)}
\le C_{K,m}(\kappa|\log\kappa|)^{1/2}.
\]
Consequently, $\phi_\kappa\to u_0$ in $C^m_{\mathrm{loc}}(\Omega)$ for every integer $m\ge0$.
\end{corollary}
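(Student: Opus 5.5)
The plan is to use linearity of the mixed extension operator. Write $\phi_\kappa=H_*(\operatorname{Tr}\phi_\kappa|_{\Gamma_*})$ on $\Omega$. This holds because $\phi_\kappa$ is harmonic, satisfies homogeneous Neumann data on $\Gamma_N$, and, by the weak formulation \eqref{eq:weak_formulation} tested with $v$ vanishing on $\Gamma_*$, satisfies the defining identity of $H_*$.

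The limit $u_0$ is the bounded mixed harmonic extension of the step datum $\Phi_0$. This datum lies in $L^\infty(\Gamma_*)$ but not in $H^{1/2}(\Gamma_*)$, so $u_0$ has to be defined by density or through the mixed harmonic measure \cite{Kenig1994,TaylorOttBrown2013}. I would therefore first extend $H_*$ to an operator $L^2(\Gamma_*)\to C^m(K)$ that is bounded for every compact $K\cc\Omega$, and then apply it to the difference $g_\kappa:=\operatorname{Tr}\phi_\kappa-\Phi_0$. By linearity,
\[
\|\phi_\kappa-u_0\|_{C^m(K)}\le C_{K,m}\|g_\kappa\|_{L^2(\Gamma_*)}\le C_{K,m}C_0(\kappa|\log\kappa|)^{1/2},
\]
where the last step is Theorem~\ref{thm:boundary_harmonic_selection} with $s=0$.

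The key estimate, which is also the main obstacle, is the interior bound
\begin{equation*}
\|H_*g\|_{C^m(K)}\le C_{K,m}\|g\|_{L^2(\Gamma_*)},\qquad g\in H^{1/2}(\Gamma_*).
\end{equation*}
I would prove it by duality. First fix a ball $B\cc\Omega$ containing $K$ in its interior, or cover $K$ by finitely many such balls. Interior estimates for harmonic functions give $\|u\|_{C^m(K)}\le C\|u\|_{L^1(B)}$, so it suffices to bound $\int_\Omega u\,\psi\,dx$ for $\psi\in L^\infty(B)$ extended by zero.

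For such $\psi$, let $w\in H^1(\Omega)$ solve the mixed problem $-\Delta w=\psi$ in $\Omega$, $w=0$ on $\Gamma_*$, $\partial_\nu w=0$ on $\Gamma_N$. Green's formula then gives
\[
\int_\Omega u\,\psi\,dx=-\int_{\Gamma_*} g\,\partial_\nu w\,ds .
\]
It remains to show $\|\partial_\nu w\|_{L^2(\Gamma_*)}\le C\|\psi\|_{L^\infty}$. Since $\psi$ vanishes near $\partial\Omega$, $w$ is harmonic near the boundary, and I would localize as follows:
\begin{itemize}
\item Near interior points of $\Gamma_*$ away from $\overline{\Gamma_N}$ and from corners, the problem is pure Dirichlet on a $C^{1,1}$ arc, and standard $H^2$ shift results \cite{Grisvard2011} give $w\in H^2$, hence $\partial_\nu w\in H^{1/2}\subset L^2$.
\item At the junctions $\Sigma_*$ both sides carry Dirichlet data, so the same Dirichlet regularity applies; this is where \eqref{eq:separation_condition} is used.
\item At the endpoints where $\Gamma_*$ meets $\Gamma_N$, the mixed Dirichlet--Neumann transition produces a singularity $r^{1/2}\sin(\theta/2)$, or $r^{\pi/(2\omega)}$ at a corner of angle $\omega$. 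Then $\partial_\nu w\sim r^{-1/2}$, or $r^{\pi/(2\omega)-1}$, which is borderline and fails to be in $L^2$ for the straight case.
\end{itemize}
The last case is the genuine difficulty. I would handle it with the $L^2$ Dirichlet-regularity results for mixed problems on Lipschitz domains \cite{TaylorOttBrown2013}, which give $\partial_\nu w\in L^p$ for some $p>1$ near the mixed points with norm controlled by $\|\psi\|$. If only $p<2$ is available, I would take a slightly weaker route: use $\|g_\kappa\|_{L^{p'}}$ in place of $\|g_\kappa\|_{L^2}$, and control it by the $L^2$ rate together with the uniform bound $\|g_\kappa\|_{L^\infty}\le C$, which comes from the range bound $\phi_a\le\phi_\kappa\le\phi_c$ obtained by the maximum principle. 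This interpolation loses a power of the rate. To avoid that loss, I would instead use a cutoff argument: the $C^m(K)$ norm depends on the datum near the mixed points only through a smooth, exponentially localized kernel, because those points are separated from the junctions. On those pieces $g_\kappa$ can be estimated in the weighted $L^2$ space dual to the singular normal derivative, and this still yields the $(\kappa|\log\kappa|)^{1/2}$ rate.

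Finally, the same duality bound shows that $H_*$ extends continuously from $L^2(\Gamma_*)$ (or $L^{p'}$) into $C^m(K)$, and that on $L^\infty$ data the extension agrees with the bounded mixed harmonic extension. This identifies $u_0=H_*\Phi_0$ consistently and completes the argument.
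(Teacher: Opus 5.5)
Your reduction is the paper's own proof: write $\phi_\kappa-u_0=\mathcal P_*g_\kappa$, apply the $L^2\to C^m(K)$ bound \eqref{eq:poisson_l2_smoothing} of Lemma~\ref{lem:star_extension_poisson}, and use the $s=0$ rate \eqref{convrategood}. Your duality analysis correctly finds that this bound fails wherever $\Gamma_*$ meets $\Gamma_N$ at a smooth boundary point $z$. Flattening the boundary, mapping by $\sqrt{\cdot}$ and reflecting evenly shows that the Poisson kernel behaves like $c(x)|y-z|^{-1/2}$ with $c(x)>0$. Hence $g\mapsto\mathcal P_*g(x)$ is unbounded on $L^2(\Gamma_*)$; take $g=r^{-1/2}/\log(e/r)$ near $z$, with $r=|y-z|$. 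The uniform square-integrability of the kernel asserted in the proof of Lemma~\ref{lem:star_extension_poisson} therefore holds only when such transitions sit at convex corners, as in the rectangle of Section~\ref{sec:numerics}, where $\pi/(2\omega)=1$. So citing that lemma does not settle, for example, the $C^{1,1}$ case, and your suspicion is justified.

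The genuine gap is in your repair. The $L^{p'}$ interpolation gives only $(\kappa|\log\kappa|)^{1/p'}$ with $p'>2$, which is weaker than the claim. The cutoff paragraph is not an argument. Near a $\Gamma_*$--$\Gamma_N$ endpoint the kernel is neither smooth nor exponentially localized: its singularity comes from the change of boundary type, and the distance from $\Sigma_*$ is irrelevant to it. Nor does the needed weighted bound on $g_\kappa$ follow from the global facts you use. For example, splitting $\int|y-z|^{-1/2}|g_\kappa|\,ds$ using only the $L^2$ rate and $|g_\kappa|\le\phi_c-\phi_a$ gives $\kappa^{1/2}|\log\kappa|$.

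What is missing is a \emph{local} smallness estimate at the transition points, and at a smooth transition it is cheap to get. If $z\in\overline{\Gamma_c}\cap\overline{\Gamma_N}$, then $z\notin\overline{\Gamma_a}$ by \eqref{eq:separation_condition}. Test \eqref{eq:weak_formulation} with $v=\zeta^2(\phi_\kappa-\phi_c)$, where $\zeta$ is a cutoff equal to $1$ near $z$ and vanishing near $\overline{\Gamma_a}$. Using $i_c(t)(t-\phi_c)\ge m_c(t-\phi_c)^2$, Young's inequality and the range bound of Lemma~\ref{lem:variational_tools}, you get $\frac{m_c}{\kappa}\int_{\Gamma_c}\zeta^2g_\kappa^2\,ds\le 2(\phi_c-\phi_a)^2\int_\Omega|\nabla\zeta|^2\,dx$. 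That is, $\|g_\kappa\|^2_{L^2(\Gamma_c\cap B_\delta(z))}\le C\kappa$, and likewise at $\Gamma_a$--$\Gamma_N$ endpoints. Combine this with $|\partial_x^\alpha P_*(x,y)|\le C_{K,\alpha}|y-z|^{-1/2}$ and $|g_\kappa|\le\phi_c-\phi_a$, and split at $|y-z|=\kappa$. The contribution near $z$ is then bounded by $C\kappa^{1/2}+C(\kappa|\log\kappa|)^{1/2}$. Away from the transition points the kernel is bounded and \eqref{convrategood} applies directly.

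At a reentrant-corner transition the kernel $|y-z|^{\pi/(2\omega)-1}$ is more singular. There the same splitting yields only $\kappa^{\pi/(2\omega)}$, so you would need a sharper, pointwise local bound on $g_\kappa$.
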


The energy associated with the same weak problem \eqref{eq:intro_model_pde} is
\begin{equation}\label{elec_vark}
J_\kappa(\phi):= \frac{1}{2}\int_\Omega |\nabla \phi|^2
+ \frac{1}{\kappa}\left(\int_{\Gamma_c}I_c(\phi(s))\,ds+\int_{\Gamma_a}I_a(\phi(s))\,ds\right),
\end{equation}
where $I_c(\phi):=\int_{\phi_c}^{\phi} i_c(r)\,dr$ and
$I_a(\phi):=\int_{\phi_a}^{\phi} i_a(r)\,dr$. These primitives are normalized to vanish at their respective
equilibria; they are nonnegative and strictly convex. Lemma~\ref{lem:basic_solvability_regularity} also identifies
$\phi_\kappa$ as the unique minimizer of $J_\kappa$.

Where Theorem~\ref{thm:boundary_harmonic_selection} and Corollary~\ref{cor:interior_selection} control the trace error
and the selected interior profile, the next theorem pins down the sharp logarithmic energy constant.

\begin{theorem}[Sharp logarithmic expansion]\label{thm:energy_scale}
Assume the geometric setting \eqref{eq:boundary_decomp}--\eqref{eq:separation_condition}. Let $J_\kappa$ be the
energy functional in \eqref{elec_vark}, and assume there are
$N=\#\Sigma_*$ junction points. Then
there is $C>0$ such that, for all sufficiently small $\kappa>0$,
\begin{equation}\label{eq:exact_log_constant}
\left|
J_\kappa(\phi_\kappa)
-\frac{N(\phi_c-\phi_a)^2}{2\pi}|\log\kappa|
\right|
\le C\log|\log\kappa|.
\end{equation}
Consequently
\[
\lim_{\kappa\to0^+}\frac{J_\kappa(\phi_\kappa)}{|\log\kappa|}
=\frac{N(\phi_c-\phi_a)^2}{2\pi}.
\]
The remainder constant depends only on the fixed domain geometry, the boundary decomposition, the equilibrium gap
$\phi_c-\phi_a$, and the fixed boundary-reaction parameters.
\end{theorem}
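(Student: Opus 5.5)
We prove matching upper and lower bounds for $J_\kappa(\phi_\kappa)=\min J_\kappa$, each of the form $\frac{N(\phi_c-\phi_a)^2}{2\pi}|\log\kappa|+O(\log|\log\kappa|)$. The model computation is the half-plane one. The harmonic function $\frac{\phi_c-\phi_a}{\pi}\arg(z)$, suitably shifted, realizes the step trace. On the half-annulus $\{\rho<|z|<R\}\cap\{y>0\}$ its Dirichlet energy is $\frac12\cdot\frac{(\phi_c-\phi_a)^2}{\pi^2}\cdot\pi\log(R/\rho)=\frac{(\phi_c-\phi_a)^2}{2\pi}\log(R/\rho)$. With $\rho\simeq\kappa$ and $R$ fixed, this gives the constant. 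The separation condition \eqref{eq:separation_condition} guarantees that each junction has a fixed-size reactive $C^{1,1}$ neighborhood. We flatten it with a conformal map or a $C^{1,1}$ diffeomorphism whose Jacobian is $1+O(r)$ at distance $r$. The resulting metric error integrates against $|\nabla u|^2\sim r^{-2}$ to $\int_\rho^R O(r)\,r^{-1}\,dr=O(1)$, so it only affects the remainder.

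\emph{Upper bound.} We build a competitor $\psi_\kappa$. Near each junction $p_j$, in a disc of fixed radius $R$, take $\psi_\kappa=\phi_a+\frac{\phi_c-\phi_a}{\pi}\theta$ in flattened polar coordinates, cut off to the linear interpolation between $\phi_a$ and $\phi_c$ along the boundary on the scale $\rho=\kappa$ (for instance, harmonic in $\theta$ with the boundary data mollified at scale $\kappa$). Away from the junction discs, take any fixed $H^1$ function, independent of $\kappa$, that equals $\phi_c$ on $\Gamma_c$ and $\phi_a$ on $\Gamma_a$; this is possible because the junctions are separated from $\Gamma_N$ and from corners. Gluing costs $O(1)$. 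The Dirichlet energy is then $\frac{N(\phi_c-\phi_a)^2}{2\pi}\log(R/\kappa)+O(1)$. The boundary term is $\kappa^{-1}\int I(\psi_\kappa)$ over a set of length $O(\kappa)$ where $\psi_\kappa$ is bounded, so it is $O(1)$. Hence the upper bound holds even with an $O(1)$ remainder.

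\emph{Lower bound.} This is the main obstacle. Set $\delta=\phi_c-\phi_a$. The difficulty is that $\phi_\kappa$'s trace is only close to $\Phi_0$ in a weak sense, namely Theorem~\ref{thm:boundary_harmonic_selection} with a $|\log\kappa|^{1/2}$ loss. The upper bound and $I\ge0$ give the a priori bound $\kappa^{-1}\int_{\Gamma_*}I(\phi_\kappa)\le C|\log\kappa|$, and $I(\phi)\gtrsim\min(|\phi-\phi_{eq}|^2,|\phi-\phi_{eq}|)$. So on the boundary arc $\{t:|t|<r\}$ on either side of $p_j$ (after flattening), the set where $|\phi_\kappa-\phi_{c/a}|>\eta$ has measure at most $C\kappa|\log\kappa|\eta^{-2}$. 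Then run a dyadic-annulus argument on $A_k=\{2^{-k-1}<|z|<2^{-k}\}$ for scales $2^{-k}$ between $\rho:=\kappa|\log\kappa|^{3}$ (say) and $R$. On each half-annulus, the trace on the two boundary segments is within $\eta$ of $\phi_a$ and $\phi_c$ except on a small exceptional set, which is a vanishing fraction of the segment once $2^{-k}\gg\kappa|\log\kappa|\eta^{-2}$. A one-dimensional estimate on each semicircle $|z|=r$ gives $\int_{S_r}|\partial_\theta u|^2\,d\theta\ge(\delta-2\eta)^2/\pi$, by Cauchy–Schwarz along the arc from angle $0$ to $\pi$. This applies for all $r$ except a small set, where endpoint values at $t=\pm r$ fall in the exceptional set; the exceptional set has small measure in $r$ as well. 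Integrating $\frac12\int r^{-1}\,dr$ gives at least $\frac{(\delta-2\eta)^2}{2\pi}\log(R/\rho)$ per junction, minus the measure of bad radii weighted by $1/r$. Choose $\eta=|\log\kappa|^{-1}$, and take the inner radius $\rho=\kappa|\log\kappa|^{C}$ so that the bad set of radii has logarithmic measure $O(\log|\log\kappa|)$. This yields $\frac{N\delta^2}{2\pi}|\log\kappa|-C\log|\log\kappa|$, since $\eta\,|\log\kappa|=O(1)$ and $\log(\kappa^{-1}/\rho^{-1})=O(\log|\log\kappa|)$. I expect the delicate point to be converting the measure bound on the exceptional boundary set into a bound on the logarithmic measure of bad radii. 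I would handle it by a weak-type estimate: the set of $r$ with $\pm r$ exceptional has measure at most the exceptional set's measure, and weighting by $1/r$ is controlled on $r\ge\rho$. Disjointness of the junction discs lets the $N$ contributions add, and the boundary energy is dropped since $I\ge0$.
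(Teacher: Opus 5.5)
Your proposal is correct and follows the paper's proof. The upper bound uses the same angular profile $\frac{\phi_c-\phi_a}{\pi}\theta$, cut off at scale $\kappa$ and glued to a fixed exterior function; the lower bound uses the same endpoint inequality on angular slices of flattened half-annuli down to an inner radius $\kappa|\log\kappa|^{p}$, driven by the bound $\|\operatorname{Tr}\phi_\kappa-\Phi_0\|_{L^2(\Gamma_*)}^2\lesssim\kappa|\log\kappa|$ that follows from the energy upper bound. The only difference is how the trace error is absorbed: you discard a Chebyshev set of bad radii (threshold $\eta=|\log\kappa|^{-1}$, inner radius $\kappa|\log\kappa|^3$, so the bad radii have logarithmic measure $O(1)$), whereas Lemma~\ref{lem:local_annular_lower_bound} linearizes $|b-a|^2\ge d^2-2d(|b-q_+|+|a-q_-|)$ and applies Cauchy--Schwarz against $\rho^{-2}$ with $r_\kappa=\kappa|\log\kappa|^2$; both give the same $O(\log|\log\kappa|)$ remainder.
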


The preliminary lemmas are collected in Section~\ref{sec:preliminary-lemmas}, and the proofs of the singular-limit
theorems are given in Section~\ref{sec:proofs}.
 % !TEX root = ../main.tex
\section{Preliminary lemmas}\label{sec:preliminary-lemmas}

This section collects the analytic inputs used in the proofs: coercivity of the boundary primitives, range and
regularity properties of solutions, mixed harmonic extension estimates, and the energy bounds.

\begin{lemma}[Basic solvability and regularity]\label{lem:basic_solvability_regularity}
For every $\kappa>0$, equation \eqref{eq:intro_model_pde} admits a unique weak solution
$\phi\in H^1(\Omega)$. If $\Omega$ is $C^{1,1}$, then this solution belongs to $H^{3/2}(\Omega)$ and satisfies
\begin{equation}\label{reg_eq}
\|\phi\|_{H^{3/2}(\Omega)}\leq Be^{A\|\phi\|_{H^{1}(\Omega)}^2}+C'\|\phi\|_{L^2(\Omega)}
\end{equation}
for suitable constants $A,B,C'>0$, with $B$ allowed to depend on $\kappa$. If $\Omega$ is a Lipschitz
curvilinear polygon, then for every $\eta\in(0,1/2)$ the same estimate holds with
$H^{3/2-\eta}(\Omega)$ in place of $H^{3/2}(\Omega)$ and with constants allowed to depend on $\eta$.
\end{lemma}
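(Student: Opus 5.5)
We obtain existence and uniqueness by the direct method applied to $J_\kappa$ in \eqref{elec_vark}, and regularity by a bootstrap that uses trace Moser--Trudinger to control the exponential Neumann data.

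\textbf{Existence and uniqueness.} The primitives $I_c,I_a$ are nonnegative, strictly convex, and grow at least linearly at $\pm\infty$; indeed, for some $c_0>0$, $I_c(\phi)\ge c_0|\phi-\phi_c|-C$. The Dirichlet term controls $\phi$ up to constants, so we add a Poincaré-type inequality $\|\phi\|_{L^2(\Omega)}\le C(\|\nabla\phi\|_{L^2}+\|\operatorname{Tr}\phi\|_{L^1(\Gamma_c)})$. Together these show $J_\kappa$ is coercive on $H^1(\Omega)$. The functional is convex, hence weakly lower semicontinuous: the boundary part is handled by Fatou along a.e.-convergent subsequences of traces, using compactness of $H^1(\Omega)\to L^2(\partial\Omega)$. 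It is also strictly convex, since the Dirichlet part is strictly convex modulo constants and $I_c$ is strictly convex in the constant direction. A minimizer therefore exists and is unique.

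To identify minimizers with weak solutions we need $J_\kappa$ to be Gâteaux differentiable on $H^1$. Here the trace Moser--Trudinger inequality \cite{Cianchi2008,Yang2007SharpTraceTM} is the key tool. It gives
\[
\int_{\partial\Omega}e^{\beta|u|}\,ds\le C e^{C\beta^2\|u\|_{H^1}^2}
\]
for all $u\in H^1(\Omega)$ and $\beta>0$, so $i_c(\phi),I_c(\phi)\in L^p(\partial\Omega)$ for all $p<\infty$. Dominated convergence then differentiates under the integral, and \eqref{eq:weak_formulation} is the Euler--Lagrange equation. Conversely, any weak solution minimizes the convex $J_\kappa$, so by strict convexity the weak solution is unique.

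\textbf{Regularity.} We view $\phi$ as solving the linear Neumann problem
\[
-\Delta\phi+\phi=\phi \text{ in } \Omega,\qquad \partial_\nu\phi=g \text{ on } \partial\Omega,
\]
where $g=-\kappa^{-1}i_{c}(\phi)$ on $\Gamma_c$, $g=-\kappa^{-1}i_a(\phi)$ on $\Gamma_a$, and $g=0$ on $\Gamma_N$. For $C^{1,1}$ domains, the Neumann shift \cite{Grisvard2011} gives
\[
\|\phi\|_{H^{3/2}}\le C(\|g\|_{H^{0}(\partial\Omega)}+\|\phi\|_{L^2}),
\]
which is the $H^{3/2}$ endpoint via $L^2$ Neumann data (Jerison--Kenig type). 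If that endpoint is unavailable, we instead show $g\in H^{\epsilon}$ and use the $H^{3/2+\epsilon}$ shift, or interpolate. It then remains to bound $\|g\|_{L^2(\partial\Omega)}$. By trace Moser--Trudinger with $\beta=2\max(C_1,C_2,A_1,A_2)$,
\[
\|g\|_{L^2}^2\le C\kappa^{-2}\int_{\partial\Omega}e^{\beta|\phi|}\,ds\le C\kappa^{-2}e^{C\|\phi\|_{H^1}^2}.
\]
Taking square roots gives \eqref{reg_eq} with $B\sim\kappa^{-1}$ and $A=C/2$. The possible jump of $g$ at arc endpoints is harmless, because only $L^2$ integrability is used.

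\textbf{Curvilinear polygon case.} Here the shift theorem fails at corners and at Neumann/reaction junction points. Away from these points we localize with cutoffs and apply the previous argument. Near a corner of angle $\omega\in(0,2\pi)$, we use Grisvard's (or Kenig's \cite{Kenig1994}) Neumann theory on Lipschitz polygons: for $L^2$ Neumann data, which lies in $H^{-1/2+\eta'}$-compatible spaces, the solution lies in $H^{3/2-\eta}$ for every $\eta>0$. The singular functions $r^{\pi k/\omega}$ belong to $H^{1+\pi/\omega-}$, and $\pi/\omega>1/2$, so they sit inside $H^{3/2-\eta}$. The constants then depend on $\eta$.

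\textbf{Main obstacle.} I expect the hardest step to be this sharp-endpoint regularity: justifying $H^{3/2}$ (respectively $H^{3/2-\eta}$) from merely $L^2$ Neumann data with explicit dependence on $\|g\|_{L^2}$. I would first try the Lipschitz-domain $L^2$-Neumann theory (nontangential maximal function estimates), which yields $H^{3/2}$ on $C^{1,1}$ domains. Failing that, I would accept $H^{3/2-\eta}$ by duality and interpolation, then recover the endpoint in the $C^{1,1}$ case by showing $g\in H^{\epsilon}$. For that last step, note that $\phi\in H^{3/2-\eta}$ gives a trace in $H^{1-\eta}\subset L^\infty$ in one dimension, so $e^{\phi}$ is in $H^{1-\eta}$, which is stronger than needed.
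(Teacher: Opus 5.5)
Your proposal is correct and follows essentially the paper's proof: the direct method for $J_\kappa$, trace Moser--Trudinger to make $J_\kappa$ differentiable and to bound $\|g\|_{L^2(\partial\Omega)}\le C\kappa^{-1}e^{C\|\phi\|_{H^1}^2}$, and the Neumann shift theorem from $L^2$ data. Compared with the paper, you get coercivity from the linear growth of $I_c$ and lower semicontinuity from Fatou, where the paper uses the quadratic bounds of Lemma~\ref{lem:quadratic_primitives} and uniform integrability; you also check explicitly that every weak solution is a minimizer, which the paper leaves implicit.

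The endpoint you flag is standard on $C^{1,1}$ domains: interpolate the $H^{-1/2}\to H^1$ and $H^{1/2}\to H^2$ Neumann estimates. So the fallback through $g\in H^{\epsilon}$ is not needed. As written it would not give \eqref{reg_eq} anyway, because bounding $\|g\|_{H^{\epsilon}}$ through $e^{C\|\phi\|_{L^\infty(\partial\Omega)}}$, with $\|\phi\|_{L^\infty(\partial\Omega)}\lesssim e^{C\|\phi\|_{H^1}^2}$, produces a doubly exponential constant.
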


The proof uses the following elementary coercivity estimate for the boundary primitives.

\begin{lemma}[Quadratic lower bounds for the primitives]\label{lem:quadratic_primitives}
The boundary primitives satisfy
\[
I_c(t)\ge \frac{m_c}{2}(t-\phi_c)^2,\qquad
I_a(t)\ge \frac{m_a}{2}(t-\phi_a)^2
\qquad\text{for all }t\in\mathbb R,
\]
where
\[
m_c:=i_{c,0}\min\{C_1,C_2\},\qquad
m_a:=i_{a,0}\min\{A_1,A_2\}.
\]
\end{lemma}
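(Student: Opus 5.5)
The plan is to derive the bound from a pointwise lower bound on the derivative of each current, $i_c'(t)\ge m_c$ (respectively $i_a'(t)\ge m_a$) for all $t$, combined with a second-order Taylor argument around the equilibrium where the primitive and the current both vanish. We treat $I_c$; the anodic case is identical after replacing $(i_{c,0},C_1,C_2,\phi_c)$ by $(i_{a,0},A_2,A_1,\phi_a)$.

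\emph{Step 1: lower bound on $i_c'$.} Set $x:=t-\phi_c$. Differentiating \eqref{i2} gives
\[
i_c'(t)=i_{c,0}\bigl(C_1e^{C_1x}+C_2e^{-C_2x}\bigr).
\]
Both terms are positive. Split according to the sign of $x$:
\begin{itemize}
\item if $x\ge0$, then $C_1e^{C_1x}\ge C_1$;
\item if $x\le0$, then $C_2e^{-C_2x}\ge C_2$.
\end{itemize}
Dropping the other (positive) term in each case yields
\[
i_c'(t)\ge i_{c,0}\min\{C_1,C_2\}=m_c\qquad\text{for all }t\in\mathbb R.
\]

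\emph{Step 2: integrate twice.} Since $i_c(\phi_c)=0$ and $i_c'\ge m_c$, the mean value theorem gives
\[
i_c(r)\ge m_c(r-\phi_c)\ \text{ for } r\ge\phi_c,\qquad i_c(r)\le m_c(r-\phi_c)\ \text{ for } r\le\phi_c.
\]
In both cases $i_c(r)(r-\phi_c)\ge m_c(r-\phi_c)^2$.

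For $t\ge\phi_c$ we then have
\[
I_c(t)=\int_{\phi_c}^t i_c(r)\,dr\ge\int_{\phi_c}^t m_c(r-\phi_c)\,dr=\frac{m_c}{2}(t-\phi_c)^2.
\]
For $t<\phi_c$ we write $I_c(t)=\int_t^{\phi_c}(-i_c(r))\,dr$ and use $-i_c(r)\ge m_c(\phi_c-r)$ to obtain the same bound.

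Equivalently, Taylor's formula with integral remainder, using $I_c(\phi_c)=0$, $I_c'(\phi_c)=i_c(\phi_c)=0$ and $I_c''=i_c'\ge m_c$, gives
\[
I_c(t)=\int_{\phi_c}^t (t-r)\,i_c'(r)\,dr\ge \frac{m_c}{2}(t-\phi_c)^2,
\]
since $(t-r)$ keeps a sign matching the orientation of the integral.

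\emph{Step 3: byproducts.} The same computation shows that $I_c$ is strictly convex and nonnegative, as asserted after \eqref{elec_vark}.

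There is no real obstacle here. The only point requiring care is the sign bookkeeping for $t<\phi_c$, where the orientation of the integral reverses. This is handled by the Taylor form, which avoids the case split altogether.
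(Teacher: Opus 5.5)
Your proposal is correct and follows the paper's own proof: the paper also bounds $I_c''=i_c'\ge m_c$ (resp. $I_a''\ge m_a$) and applies Taylor's theorem at the equilibria, where $I_c(\phi_c)=I_c'(\phi_c)=0$ and $I_a(\phi_a)=I_a'(\phi_a)=0$. Your case split in Step 1 and the integral-remainder form in Step 2 supply the details the paper leaves implicit.
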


\begin{proof}[Proof of Lemma~\ref{lem:quadratic_primitives}]
Since
\[
I_c''(t)= i_{c,0}\big(C_1 e^{C_1(t-\phi_c)}+C_2 e^{-C_2(t-\phi_c)}\big)\ge m_c,
\]
and similarly $I_a''\ge m_a$, Taylor's theorem at the minimizing points gives the result:
$I_c(\phi_c)=I_c'(\phi_c)=0$ and $I_a(\phi_a)=I_a'(\phi_a)=0$.
\end{proof}

\begin{proof}[Proof of Lemma~\ref{lem:basic_solvability_regularity}]
We first record the exponential trace estimate needed below. The trace Moser--Trudinger inequality on the compact
one-dimensional Lipschitz manifold $\partial\Omega$ implies that, for every $\lambda>0$, there is $C_\lambda>0$
such that
\[
\int_{\partial\Omega} e^{\lambda |u|}\,ds
\le C_\lambda\exp\!\big(C_\lambda\|u\|_{H^{1/2}(\partial\Omega)}^2\big)
\qquad (u\in H^{1/2}(\partial\Omega)).
\]
Indeed, this follows from the normalized trace Moser--Trudinger inequality applied to $u-\bar u$, Young's
inequality, and the bound $|\bar u|\le C\|u\|_{H^{1/2}(\partial\Omega)}$; see
\cite{Cianchi2008,Yang2007SharpTraceTM}. Since the signed exponential nonlinearities and their primitives have at most
linear exponential growth, the trace theorem gives, for every $1\le p<\infty$ and every $\phi\in H^1(\Omega)$,
\[
\begin{aligned}
&\|I_c(\phi)\|_{L^p(\Gamma_c)}^p+\|I_a(\phi)\|_{L^p(\Gamma_a)}^p \\
&\quad+\|i_c(\phi)\|_{L^p(\Gamma_c)}^p+\|i_a(\phi)\|_{L^p(\Gamma_a)}^p
\le C_p\exp\!\big(C_p\|\phi\|_{H^1(\Omega)}^2\big).
\end{aligned}
\]
In particular, the boundary part of $J_\kappa$ is finite on $H^1(\Omega)$ and
the boundary nonlinearities belong to $L^p$ for every finite $p$.

We now prove existence and uniqueness. Lemma~\ref{lem:quadratic_primitives} gives constants $m_c,m_a>0$ such that
\[
J_\kappa(\phi)\ge \frac12\|\nabla\phi\|_{L^2(\Omega)}^2
+\frac{c}{\kappa}\|\operatorname{Tr}\phi-\Phi_0\|_{L^2(\Gamma_a\cup\Gamma_c)}^2,
\qquad c:=\frac12\min\{m_a,m_c\}>0.
\]
Since $\Gamma_a\cup\Gamma_c$ has positive boundary measure, the Poincar\'e trace inequality
\[
\|\phi\|_{L^2(\Omega)}^2
\le C\bigl(\|\nabla\phi\|_{L^2(\Omega)}^2+\|\operatorname{Tr}\phi\|_{L^2(\Gamma_a\cup\Gamma_c)}^2\bigr)
\]
and the boundedness of $\Phi_0$ imply coercivity of $J_\kappa$ on $H^1(\Omega)$.

Let $(\phi_n)$ be a minimizing sequence. Coercivity gives $\phi_n\rightharpoonup\phi$ in $H^1(\Omega)$ after passing
to a subsequence, and the compact trace embedding gives $\operatorname{Tr}\phi_n\to\operatorname{Tr}\phi$ strongly in
$L^2(\partial\Omega)$ and a.e. on the boundary. The boundedness of $(\phi_n)$ in $H^1$ and
the exponential trace estimate above, applied with some $q>1$, imply an $L^q$ bound on
$I_c(\phi_n)$ and $I_a(\phi_n)$ on the corresponding boundary arcs. These boundary primitives are therefore uniformly
integrable. Thus, since the traces converge a.e.,
\[
\int_{\Gamma_c}I_c(\phi_n)\,ds+\int_{\Gamma_a}I_a(\phi_n)\,ds
\to
\int_{\Gamma_c}I_c(\phi)\,ds+\int_{\Gamma_a}I_a(\phi)\,ds,
\]
while the Dirichlet term is weakly lower semicontinuous. Hence $J_\kappa$ attains its minimum.

The functional is strictly convex. Indeed, equality in the convexity inequality for the Dirichlet term forces two
competitors to differ by a constant, and equality in the boundary terms is then impossible unless that constant is
zero, because $I_c''>0$ and $I_a''>0$ on the positive-measure sets $\Gamma_c$ and $\Gamma_a$. Hence the minimizer is unique. Its first
variation is well defined since the exponential trace estimate gives $i_c(\phi),i_a(\phi)\in L^2$ on the boundary and the
trace of every $v\in H^1(\Omega)$ lies in $L^2(\partial\Omega)$. The minimizer therefore satisfies
\eqref{eq:weak_formulation}, which is the weak form of \eqref{eq:intro_model_pde}.

It remains to prove the stated regularity. Let $\phi$ be the unique weak solution. The boundary datum
\[
F_\phi=
\begin{cases}
0 & \text{on }\Gamma_N,\\
-\dfrac{i_c(\phi)}{\kappa} & \text{on }\Gamma_c,\\
-\dfrac{i_a(\phi)}{\kappa} & \text{on }\Gamma_a
\end{cases}
\]
belongs to $L^2(\partial\Omega)$ by the exponential trace estimate above. More explicitly,
\[
\|F_\phi\|_{L^2(\partial\Omega)}
\le \frac{\|i_c(\phi)\|_{L^2(\Gamma_c)}+\|i_a(\phi)\|_{L^2(\Gamma_a)}}{\kappa}
\le B e^{A\|\phi\|_{H^1(\Omega)}^2},
\]
with $B$ allowed to depend on the fixed value of $\kappa$. Taking $v=1$ in \eqref{eq:weak_formulation} gives the compatibility
condition $\int_{\partial\Omega}F_\phi\,ds=0$. Standard Neumann regularity for
$C^{1,1}$ domains \cite{LionsMagenes1972,Grisvard2011} gives
\[
\|\phi\|_{H^{3/2}(\Omega)}\le C' \bigl(\|F_\phi\|_{L^2(\partial \Omega)}+\|\phi\|_{L^2(\Omega)}\bigr),
\]
which proves the estimate in the smooth-boundary case. For Lipschitz curvilinear polygons we use the corresponding
Neumann shift theorem below the corner threshold: for every $\eta>0$,
\[
\|\phi\|_{H^{3/2-\eta}(\Omega)}
\le C'_\eta \bigl(\|F_\phi\|_{L^2(\partial \Omega)}+\|\phi\|_{L^2(\Omega)}\bigr),
\]
again from \cite{Grisvard2011}. Since every Lipschitz polygonal angle lies in $(0,2\pi)$, the first Neumann corner
exponents remain larger than $1/2$, which is enough for the order used here.
This proves the curvilinear polygonal statement.
\end{proof}

The remaining properties of the PDE solution are grouped in the following lemma.

\begin{lemma}[Bounds on the solution and optimal regularity]\label{lem:variational_tools}
Assume that $\Omega$ and the boundary decomposition satisfy
\eqref{eq:boundary_decomp}--\eqref{eq:separation_condition}, and let \(\phi\) be the weak solution of
\eqref{eq:intro_model_pde}, equivalently \eqref{eq:weak_formulation}.
\begin{enumerate}
\item If, in addition, $\Omega$ is connected, then
\[
\phi_a\le \phi(x)\le \phi_c
\qquad\text{for a.e. }x\in\Omega.
\]

\item In the $C^{1,1}$ case, the solution satisfies $\phi\in H^s(\Omega)$ for all $0\le s<2$. In the Lipschitz
curvilinear polygonal case, $\phi\in H^{3/2-\eta}(\Omega)$ for every $\eta\in(0,1/2)$, and
$\phi\in H^s(U\cap\Omega)$ for all $0\le s<2$ whenever $\overline U$ is disjoint from the corner set
$\mathcal C_\Omega$. Moreover, if the induced Neumann datum has a jump at a smooth boundary point, then
$\phi\notin H^2(\Omega)$.
\end{enumerate}
\end{lemma}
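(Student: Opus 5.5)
For part (1) I will use truncation in the energy. Let $\tilde\phi:=\min\{\max\{\phi,\phi_a\},\phi_c\}$, which lies in $H^1(\Omega)$. Truncation does not increase the Dirichlet integral, since $\nabla\tilde\phi=\nabla\phi\,\mathbf 1_{\{\phi_a<\phi<\phi_c\}}$. On the boundary, $I_c$ is convex with minimum at $\phi_c$, so it is decreasing on $(-\infty,\phi_c]$ and increasing on $[\phi_c,\infty)$.

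On $\Gamma_c$, the bound $|\tilde\phi-\phi_c|\le|\phi-\phi_c|$ holds pointwise, and $\tilde\phi$ lies on the same side of $\phi_c$ as $\phi$. Hence $I_c(\tilde\phi)\le I_c(\phi)$. The same argument on $\Gamma_a$ gives $I_a(\tilde\phi)\le I_a(\phi)$. Therefore $J_\kappa(\tilde\phi)\le J_\kappa(\phi)$, and uniqueness of the minimizer (Lemma~\ref{lem:basic_solvability_regularity}) forces $\tilde\phi=\phi$. Connectedness is used only through that uniqueness, i.e.\ strict convexity via the constant-difference argument. Alternatively, one can test \eqref{eq:weak_formulation} with $(\phi-\phi_c)^+$ and $(\phi_a-\phi)^+$, using the sign of $i_c,i_a$ on these sets.

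For part (2) I will bootstrap. By part (1), $\phi$ is bounded, so $i_c(\phi)$ and $i_a(\phi)$ are Lipschitz functions of a bounded trace. Starting from $\phi\in H^{3/2}(\Omega)$ (or $H^{3/2-\eta}$), the trace lies in $H^{1-\epsilon}(\partial\Omega)$. Composition with a smooth function preserves $H^{t}$ for $0<t<1$ on bounded functions, so $i_c(\operatorname{Tr}\phi)\in H^{1-\epsilon}(\Gamma_c)$.

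The Neumann datum $F_\phi$ is piecewise defined, with possible jumps at arc endpoints. I will therefore only claim $F_\phi\in H^{r}(\partial\Omega)$ for $r<1/2$: zero extension of an $H^{t}$ function on an arc is in $H^{r}$ for $r<1/2$, with $r\le t$. Neumann regularity in $C^{1,1}$ domains (\cite{Grisvard2011}), $\|\phi\|_{H^{3/2+r}}\lesssim\|F_\phi\|_{H^{r}(\partial\Omega)}+\|\phi\|_{L^2}$, then gives $\phi\in H^{s}$ for all $s<2$. A single step suffices once the trace is in $H^{1/2}$, since we only need $F_\phi\in H^{r}$ with $r<1/2$, and that follows from $\operatorname{Tr}\phi\in H^{1/2}\cap L^\infty$.

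In the polygonal case I will localize with a cutoff $\chi$ supported in $U$, $\overline U\cap\mathcal C_\Omega=\emptyset$. Then $\chi\phi$ solves a Neumann problem with datum $\chi F_\phi+\phi\,\partial_\nu\chi$ and interior source $-2\nabla\chi\cdot\nabla\phi-\phi\Delta\chi\in H^{1/2-\eta}$. The previous argument applied on a smoothed $C^{1,1}$ domain containing $U\cap\Omega$ then gives local $H^{s}$, $s<2$. Global $H^{3/2-\eta}$ regularity was already established.

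For the non-$H^2$ claim, suppose $\phi\in H^2(\Omega)$. Then $\partial_\nu\phi\in H^{1/2}(\partial\Omega)$ near a smooth boundary point, but a function with a nonzero jump at a point is not in $H^{1/2}$ on a curve. The Gagliardo seminorm diverges logarithmically, exactly the borderline computation in \cite{Mazya2011}. This contradicts the hypothesis that $F_\phi$ has a jump there.

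The main obstacle I expect is the nonlinear composition and zero-extension step in fractional spaces near junctions and near the $\Gamma_*$–$\Gamma_N$ endpoints, which must be tracked carefully to stay strictly below $H^{1/2}$.
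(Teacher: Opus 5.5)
Your proposal is correct. For part (2) and the non-$H^2$ claim it follows the paper's route. The range bound makes $i_a,i_c$ Lipschitz on the attained values, and composition plus zero extension put $F_\phi$ in $H^r(\partial\Omega)$ for every $r<1/2$. The Neumann shift then gives $H^{3/2+r}$, and a jump of $F_\phi$ contradicts $\partial_\nu\phi\in H^{1/2}$ for $H^2$ functions.

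The genuine difference is part (1). The paper tests \eqref{eq:weak_formulation} with $(\phi-\phi_c)^+$ and $-(\phi_a-\phi)^+$. It uses the signs of $i_c,i_a$ on those sets to force $\nabla(\phi-\phi_c)^+=0$, and then uses connectedness directly to exclude a positive constant. You instead show $J_\kappa(\tilde\phi)\le J_\kappa(\phi)$ for the two-sided truncation, since each primitive is monotone on either side of its minimum and both minima lie in $[\phi_a,\phi_c]$. You then conclude from uniqueness of the minimizer in Lemma~\ref{lem:basic_solvability_regularity}. Your argument gives both bounds at once and avoids the first variation. The price is that it relies on identifying the weak solution with the unique minimizer, so connectedness enters only through strict convexity. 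The paper's argument works directly at the level of the Euler--Lagrange equation.

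Your remark that $\operatorname{Tr}\phi\in H^{1/2}\cap L^\infty$ already suffices is a small real simplification. Regularity below $H^2$ then follows from the range bound and $\phi\in H^1$ alone, without feeding in the Moser--Trudinger-based $H^{3/2}$ (or $H^{3/2-\eta}$) estimate as the paper does. The same holds for your localized problem, whose source term only needs to lie in $L^2$, not $H^{1/2-\eta}$.

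Two minor points. In the localization, take $\chi\equiv1$ on $U$ with support in a slightly larger corner-free neighborhood; this is possible since $\operatorname{dist}(\overline U,\mathcal C_\Omega)>0$, and otherwise you only obtain regularity where $\chi=1$. Also, the paper additionally verifies, via the range bound and $i_c\le0\le i_a$ on $[\phi_a,\phi_c]$, that a jump actually occurs at every $\Gamma_c$--$\Gamma_a$ junction. The lemma as stated does not require this, and you prove exactly the conditional it asserts.
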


We address the two solution-dependent items in order. The proof uses only the monotonicity structure of the signed
exponential nonlinearities and the standard boundary regularity machinery already invoked in
Lemma~\ref{lem:basic_solvability_regularity}.

\begin{proof}[Proof of Lemma~\ref{lem:variational_tools}]
\setcounter{proofstep}{0}

\begin{step}{Range bound}
For the range bound, test \eqref{eq:weak_formulation} with $v=(\phi-\phi_c)^+$. On $\{v>0\}$, both $i_c(\phi)$ and
$i_a(\phi)$ are nonnegative because $\phi>\phi_c>\phi_a$. Hence all terms in the weak identity are nonnegative, so
\[
0=\int_{\{v>0\}}|\nabla\phi|^2\,dx
+\frac1\kappa\int_{\Gamma_c}i_c(\phi)v\,ds
+\frac1\kappa\int_{\Gamma_a}i_a(\phi)v\,ds
\]
forces each term to vanish. Thus $v$ is constant on the connected domain $\Omega$. If this constant were positive,
then the trace of $\phi$ would be strictly larger than both limiting values on the positive-measure set
$\Gamma_c\cup\Gamma_a$, making the boundary integrals strictly positive. Hence $v=0$ and $\phi\le\phi_c$. Testing
with $v=-(\phi_a-\phi)^+$ gives the lower bound in the same way: on $\{\phi<\phi_a\}$ the functions $i_a(\phi)$,
$i_c(\phi)$, and $v$ are all
nonpositive, so the products $i_c(\phi)v$ and $i_a(\phi)v$ are nonnegative.
\end{step}

\begin{step}{Regularity below $H^2$}
The case $s=0$ is immediate from $\phi\in H^1(\Omega)$. For the positive orders below $2$, choose
$\eta\in(0,1/2)$. Lemma~\ref{lem:basic_solvability_regularity} gives a trace in
$H^{1-\eta}$ on each relatively open boundary arc; in the $C^{1,1}$ case one may take $\eta=0$ at this first step.

On such an arc the range bound keeps $\phi$ in the compact interval $[\phi_a,\phi_c]$, and the derivatives of the
signed exponential nonlinearities are bounded on this interval. The Sobolev chain rule therefore gives
$i_a(\phi),i_c(\phi)\in H^{1-\eta}$ along their respective arcs.

Thus the induced Neumann datum $F_\phi$ is piecewise $H^{1-\eta}$, with only finitely many possible jumps at arc
endpoints. A one-dimensional piecewise $H^{1-\eta}$ function with finitely many jumps belongs to
$H^s(\partial\Omega)$ for every $s<1/2$: this follows by writing it as a finite sum of $H^{1-\eta}$ pieces cut off by
interval indicator functions, and using $\mathbf{1}_I\in H^s$ exactly for $s<1/2$.

The standard Neumann shift theorem on $C^{1,1}$ domains yields $\phi\in H^{s+3/2}(\Omega)$ for every $s<1/2$, hence
$\phi\in H^r(\Omega)$ for all $r<2$ in the smooth-boundary case. In the Lipschitz curvilinear polygonal case,
Lemma~\ref{lem:basic_solvability_regularity} gives the stated global $H^{3/2-\eta}$ regularity. Away from the finite
corner set, the boundary is $C^{1,1}$ and the same local shift theorem gives
$\phi\in H^r(U\cap\Omega)$ for all $r<2$ whenever $\overline U\cap\mathcal C_\Omega=\emptyset$. Reentrant corners may
limit the global Sobolev order below $2$, so the polygonal conclusion is local near smooth boundary points.
\end{step}

\begin{step}{Failure of $H^2$}
It remains to identify when a jump is actually present. Taking $\eta<1/2$ above, the trace has a continuous
representative in a one-dimensional boundary coordinate across every smooth junction. Let $x_j\in\Sigma_*$ be a
smooth $\Gamma_c$--$\Gamma_a$ junction and write
$\tau_j:=\operatorname{Tr}\phi(x_j)$. The one-sided limits of the induced Neumann datum at $x_j$ are
\[
-i_c(\tau_j)/\kappa
\qquad\text{and}\qquad
-i_a(\tau_j)/\kappa,
\]
in the order determined by the adjacent $\Gamma_c$ and $\Gamma_a$ arcs. The range bound gives
$\tau_j\in[\phi_a,\phi_c]$. On this interval,
\[
\begin{cases}
i_c(t)\le0,\quad i_c(t)=0\Longleftrightarrow t=\phi_c,\\
i_a(t)\ge0,\quad i_a(t)=0\Longleftrightarrow t=\phi_a,
\end{cases}
\]
because the signed exponential nonlinearities are strictly increasing and vanish only at $\phi_c$ and $\phi_a$,
respectively. Since $\phi_a<\phi_c$, the equalities $i_c(\tau_j)=i_a(\tau_j)$ would force
$\tau_j=\phi_c=\phi_a$, which is impossible. Hence the two one-sided flux limits differ, and $F_\phi$ has a jump at
every smooth point where $\Gamma_a$ and $\Gamma_c$ meet.

At a smooth endpoint where one of $\Gamma_a,\Gamma_c$ meets $\Gamma_N$, the same argument shows the exact criterion: the one-sided limits are $0$ and
$-i_a(\tau)/\kappa$ at a $\Gamma_a$-Neumann endpoint, or $0$ and $-i_c(\tau)/\kappa$ at a $\Gamma_c$-Neumann
endpoint. Thus such an endpoint is a jump of $F_\phi$ precisely when $\tau\ne\phi_a$ in the $\Gamma_a$ case, respectively
$\tau\ne\phi_c$ in the $\Gamma_c$ case.

Finally, suppose one of these jumps occurs at a smooth boundary point and $\phi\in H^2(\Omega)$. In a local boundary
coordinate the normal is continuous, so the trace theorem gives
$\partial_\nu\phi\in H^{1/2}$ on that boundary neighborhood. This contradicts the fact that a one-dimensional jump is
not in $H^{1/2}$. Hence $\phi\notin H^2(\Omega)$ whenever $F_\phi$ has a jump at a smooth $\Gamma_c$--$\Gamma_a$
junction.
\end{step}
\end{proof}

\begin{lemma}[Mixed extension and Poisson smoothing]\label{lem:star_extension_poisson}
Let $H_*$ be the mixed harmonic extension from Subsection~\ref{subsec:singular-limit-theorems}. Then, for every
$g\in H^{1/2}(\Gamma_*)$,
\begin{equation}\label{eq:star_extension_bound}
\|\nabla H_*g\|_{L^2(\Omega)}\le C\|g\|_{H^{1/2}(\Gamma_*)}.
\end{equation}
The same mixed problem has a Poisson extension $\mathcal P_*$, agreeing with $H_*$
on $H^{1/2}(\Gamma_*)$ and with the harmonic-measure extension on bounded Borel data, such that
\begin{equation}\label{eq:poisson_l2_smoothing}
\|\mathcal P_*g\|_{C^m(K)}\le C_{K,m}\|g\|_{L^2(\Gamma_*)}
\end{equation}
for every $K\cc\Omega$, $m\ge0$, and $g\in L^2(\Gamma_*)$.
\end{lemma}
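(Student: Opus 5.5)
The proof splits into two parts: the energy bound \eqref{eq:star_extension_bound} via a variational characterization of $H_*$, and the smoothing estimate \eqref{eq:poisson_l2_smoothing} via a harmonic-measure representation of $\mathcal P_*$.

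\textbf{Energy bound for $H_*$.} Given $g\in H^{1/2}(\Gamma_*)$, choose an extension $G\in H^{1/2}(\partial\Omega)$ with $G|_{\Gamma_*}=g$ and $\|G\|_{H^{1/2}(\partial\Omega)}\le 2\|g\|_{H^{1/2}(\Gamma_*)}$, which is possible by the quotient norm definition. Let $w\in H^1(\Omega)$ be a bounded right inverse of the trace applied to $G$, so that $\|w\|_{H^1(\Omega)}\le C\|G\|_{H^{1/2}(\partial\Omega)}$. On the closed subspace $V_0:=\{v\in H^1(\Omega):\operatorname{Tr}v=0\text{ on }\Gamma_*\}$, the Poincar\'e trace inequality from the proof of Lemma~\ref{lem:basic_solvability_regularity} makes the Dirichlet form coercive, because $|\Gamma_*|>0$. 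Lax--Milgram therefore gives a unique $v_0\in V_0$ with $\int\nabla(w+v_0)\cdot\nabla v=0$ for all $v\in V_0$. This proves existence and uniqueness of $H_*g=w+v_0$. The function $H_*g$ minimizes the Dirichlet integral in $w+V_0$, so $\|\nabla H_*g\|_{L^2}\le\|\nabla w\|_{L^2}\le C\|g\|_{H^{1/2}(\Gamma_*)}$.

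\textbf{Construction of $\mathcal P_*$.} For $x\in\Omega$ I would define the mixed harmonic measure $\omega^x_*$ on $\Gamma_*$. The route is to take continuous data $g$ on $\Gamma_*$, extend it to $H^{1/2}$ by smoothing if needed, and use the maximum principle for the mixed problem: the range-bound argument of Lemma~\ref{lem:variational_tools}, testing with $(u-\sup g)^+$, shows $\inf g\le H_*g\le\sup g$. Consequently $g\mapsto H_*g(x)$ is a positive, norm-one linear functional on $C(\overline{\Gamma_*})$, and Riesz representation gives $\omega^x_*$. Setting $\mathcal P_*g(x):=\int g\,d\omega^x_*$ extends the operator to bounded Borel data. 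It agrees with $H_*$ on $H^{1/2}(\Gamma_*)$ because continuous functions are dense in $H^{1/2}$, $H_*$ is $H^1$-continuous, and interior values are controlled by the $H^1$ norm through mean-value estimates. The mixed-problem harmonic measure theory of \cite{Kenig1994,TaylorOttBrown2013} can replace this construction if the density step becomes delicate.

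\textbf{$L^2$ smoothing: the main obstacle.} The estimate \eqref{eq:poisson_l2_smoothing} needs $\omega^x_*$ to have an $L^2$ density on $\Gamma_*$, locally uniformly in $x$. I would obtain it by duality through a Green function. Fix $K\cc\Omega$ and a cutoff $\chi\in C_c^\infty(\Omega)$ with $\chi=1$ near $K$, and let $G^y$ be the mixed Green function (Dirichlet on $\Gamma_*$, Neumann on $\Gamma_N$) with pole $y\in K$. Green's identity gives
\[
\mathcal P_*g(y)=-\int_{\Gamma_*}g\,\partial_\nu G^y\,ds .
\]
It then suffices to show $\|\partial_\nu G^y\|_{L^2(\Gamma_*)}\le C_K$. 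Write $G^y=\Gamma(\cdot-y)\chi+R^y$, where $\Gamma$ is the fundamental solution. The remainder $R^y$ solves a mixed problem with smooth right-hand side $\Delta(\chi\Gamma(\cdot-y))$ supported away from $y$ and from $\partial\Omega$. Away from the Dirichlet--Neumann junctions $\overline{\Gamma_*}\cap\overline{\Gamma_N}$ and the corners, local $H^2$ regularity gives $\partial_\nu R^y\in L^2$.

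Near the mixed junctions the solution behaves like $r^{1/2}$, so $\nabla R^y\sim r^{-1/2}$, which lies in $L^p$ for every $p<2$ but is only borderline for $L^2$ on the boundary. This is the step I expect to be hardest. My first attempt is to cite the $L^2$ mixed-problem solvability with Rellich estimates of \cite{TaylorOttBrown2013}, which gives nontangential $\nabla R^y\in L^2(\partial\Omega)$ for data whose regularity is uniform in $y\in K$. Failing that, I would settle for $\partial_\nu G^y\in L^{p'}$ for some $p'<2$, which gives the estimate with $L^p$ on the right for some $p>2$, and then argue separately for the $L^2$ version.

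Once $\sup_{y\in K}|\mathcal P_*g(y)|\le C_K\|g\|_{L^2}$ holds, the $C^m$ bounds follow by enlarging $K$ to $K'\cc\Omega$ and applying the interior derivative estimates for harmonic functions.
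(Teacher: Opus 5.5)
Your energy bound for $H_*$ is correct and is the paper's argument: a quotient-norm representative, an $H^1$ lift, and Lax--Milgram on the subspace with zero trace on $\Gamma_*$. Building $\omega_*^x$ from the maximum principle and Riesz representation is a reasonable substitute for the paper's appeal to the mixed harmonic-measure literature.

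The genuine gap is the step you flag as hardest, and it is an obstruction, not a technicality. The density $k^x=-\partial_\nu G^x$ is \emph{not} square-integrable near any point $p\in\overline{\Gamma_*}\cap\overline{\Gamma_N}$ at which $\partial\Omega$ is smooth.

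To see this, use the flat model with Dirichlet data on $\{s>0\}$ and Neumann data on $\{s<0\}$. The map $w=\sqrt z$, followed by even reflection across the image of the Neumann ray, turns $G^x$ into a positive harmonic function $\widetilde G^x$ vanishing on a flat piece through $w=0$. The chain rule gives $k^x(s)=\tfrac12 s^{-1/2}\,\partial_\eta\widetilde G^x(\sqrt s)$ with $\eta=\operatorname{Im}w$. Hopf's lemma gives $\partial_\eta\widetilde G^x(0)>0$, so $k^x(s)\asymp s^{-1/2}$. A $C^{1,1}$ junction reduces to this by a local conformal flattening.

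Hence \eqref{eq:poisson_l2_smoothing} fails already for $m=0$. Take the bounded data $g_n:=\min\{n,\,s^{-1/2}|\log s|^{-1}\}$ on a short arc of $\Gamma_*$ ending at $p$. Their $L^2(\Gamma_*)$ norms are uniformly bounded, yet $\mathcal P_*g_n(x)=\int g_n k^x\,ds\to\infty$, because $\int_0 s^{-1}|\log s|^{-1}\,ds=\infty$.

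Such junctions are unavoidable under the standing hypotheses, for example in every simply connected $C^{1,1}$ domain. So there is nothing to cite: the $L^2$ theory of the mixed problem in \cite{Brown1994} requires the Dirichlet and Neumann parts to meet at an angle less than $\pi$, and \cite{TaylorOttBrown2013} gives gradient bounds only for $p$ in some range $(1,p_0)$. Likewise, no separate argument can produce the $L^2$ version.

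What is true is essentially your fallback. Near each such $p$ one has $k^x(y)\lesssim|y-p|^{-1/2}$ uniformly for $x\in K$, by Harnack in $x$ followed by interior estimates for the $x$-derivatives. So the estimate holds with $\|g\|_{L^p(\Gamma_*)}$, $p>2$, on the right. The $L^2$ version holds when every point of $\overline{\Gamma_*}\cap\overline{\Gamma_N}$ is a corner of interior angle $\alpha<\pi$, where the density behaves like $r^{\pi/(2\alpha)-1}$; this is the situation in the rectangular and corrugated benchmarks.

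The paper's own proof does not get past this point either. It asserts that interior estimates in the $x$ variable make the kernel square-integrable in the boundary variable. Such estimates only control $x$-derivatives of the kernel by the kernel itself and say nothing about its integrability in $y$ near these endpoints. The lemma therefore has to be weakened rather than proved as stated.

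For Corollary~\ref{cor:interior_selection} this costs only a logarithm. The trace error is bounded by $\phi_c-\phi_a$, and splitting the boundary integral at $|y-p|=\|\operatorname{Tr}\phi_\kappa-\Phi_0\|_{L^2(\Gamma_*)}^2$ yields the interior bound with an extra factor $|\log\kappa|^{1/2}$.
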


\begin{proof}[Proof of Lemma~\ref{lem:star_extension_poisson}]
For the $H^{1/2}$ extension estimate, choose an $H^{1/2}(\partial\Omega)$ representative $G$ of $g$ and an
$H^1(\Omega)$ lift $\widetilde G$. The subspace of functions in $H^1(\Omega)$ whose trace vanishes on $\Gamma_*$ is
closed, and the trace Poincar\'e inequality on the positive-measure set $\Gamma_*$ gives
\[
\|v\|_{H^1(\Omega)}\le C\|\nabla v\|_{L^2(\Omega)}
\qquad (v\in H^1(\Omega),\ \operatorname{Tr}v=0\text{ on }\Gamma_*).
\]
Thus the bilinear form $\int_\Omega\nabla z\cdot\nabla v$ is coercive on this closed subspace. Lax--Milgram gives a
unique $z\in H^1(\Omega)$ with $\operatorname{Tr}z=0$ on $\Gamma_*$ such that
\[
\int_\Omega \nabla z\cdot\nabla v\,dx=-\int_\Omega\nabla\widetilde G\cdot\nabla v\,dx
\qquad(v\in H^1(\Omega),\ \operatorname{Tr}v=0\text{ on }\Gamma_*),
\]
and $H_*g:=\widetilde G+z$ is independent of the chosen lift. The same coercivity estimate gives
\[
\|\nabla H_*g\|_{L^2(\Omega)}\le C\|\widetilde G\|_{H^1(\Omega)}
\le C\|G\|_{H^{1/2}(\partial\Omega)}.
\]
Taking the infimum over all representatives $G$ proves \eqref{eq:star_extension_bound}.

The mixed problem under the finite decomposition used here admits harmonic-measure, equivalently Poisson-kernel,
representations for boundary data on the Dirichlet part; see
\cite{Kenig1994,Brown1994,OttBrown2013,TaylorOttBrown2013}. The kernel is associated with homogeneous Neumann data on
$\Gamma_N$ and Dirichlet data on $\Gamma_*$. For fixed $K\cc\Omega$, interior estimates applied to the kernel in the
$x$ variable imply that the kernel and all of its $x$-derivatives are square-integrable in the boundary variable,
uniformly for $x\in K$.

Hence, the Poisson extension is well defined for $L^2(\Gamma_*)$ data and, for $|\alpha|\le m$,
\[
\partial_x^\alpha \mathcal P_*g(x)=\int_{\Gamma_*}\partial_x^\alpha P_*(x,y)g(y)\,ds_y,
\qquad
\sup_{x\in K}\|\partial_x^\alpha P_*(x,\cdot)\|_{L^2(\Gamma_*)}\le C_{K,m},
\]
which proves \eqref{eq:poisson_l2_smoothing} by Cauchy's inequality.
When $g\in H^{1/2}(\Gamma_*)$, uniqueness of the finite-energy mixed solution and the same boundary trace show that
this Poisson extension agrees with the variational extension constructed above. For bounded Borel data the
harmonic-measure formula is unchanged by altering values at finitely many endpoints.
\end{proof}

\begin{lemma}[Local annular lower bound near a smooth boundary jump]\label{lem:local_annular_lower_bound}
Let $\Psi:B_R^+\to\Omega\cap U$ be a $C^{1,1}$ boundary-flattening chart with $\Psi(0)\in\partial\Omega$ and
$D\Psi(0)$ orthogonal, whose positive and negative flat boundary rays map, after a rigid motion, to two adjacent
components of $\Gamma_*$. Fix $q_\pm\in\mathbb R$ and $M>0$. Then, every
$u\in H^1(\Omega)$ with $|u|\le M$ a.e. in $\Psi(B_R^+)$ satisfies, for $0<r<R/2$,
\begin{equation}\label{eq:local_annular_lower_bound}
\frac12\int_{\Psi(A_{r,R}^+)}|\nabla u|^2\,dx
\ge
\frac{|q_+-q_-|^2}{2\pi}\log\frac{R}{r}
-C\left(1+E r^{-1/2}\right),
\end{equation}
where $A_{r,R}^+:=\{(\rho,\theta):r<\rho<R,\ 0<\theta<\pi\}$ and, for $w=u\circ\Psi$,
\[
\begin{gathered}
b(\rho):=\operatorname{Tr}w(\rho,0),\qquad
a(\rho):=\operatorname{Tr}w(-\rho,0)\quad\text{for a.e. }0<\rho<R,\\
E^2:=\int_0^R\left(|a(\rho)-q_-|^2+|b(\rho)-q_+|^2\right)\,d\rho .
\end{gathered}
\]
\end{lemma}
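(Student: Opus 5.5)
The plan is to pull everything back to the flat half-annulus through $\Psi$, obtain the logarithm from a one-dimensional estimate on each semicircle, and treat the chart distortion and the trace mismatch as perturbations.

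\emph{Step 1: pull back to the flat half-annulus.} Set $w=u\circ\Psi$. Since $\Psi$ is $C^{1,1}$ and $D\Psi(0)$ is orthogonal, we have $D\Psi(y)=D\Psi(0)+O(|y|)$. Hence the pulled-back metric satisfies
\[
|D\Psi^{-T}\xi|^2\,|\det D\Psi(y)|\ge (1-C|y|)\,|\xi|^2 .
\]
This gives
\[
\int_{\Psi(A^+_{r,R})}|\nabla u|^2\,dx\ \ge\ \int_{A^+_{r,R}}(1-C\rho)\,|\nabla w|^2\,dy
\ \ge\ \int_r^R(1-C\rho)\,\frac{1}{\rho}\int_0^\pi|\partial_\theta w(\rho,\theta)|^2\,d\theta\,d\rho ,
\]
where we drop the radial derivative and write the integral in polar coordinates. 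If $R$ is not small enough for $1-C\rho>0$, we first shrink it. This changes only the $O(1)$ constant, since $u$ is bounded and the omitted range of $\rho$ contributes at most $\frac{|q_+-q_-|^2}{2\pi}\log$ of a fixed ratio.

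\emph{Step 2: the semicircle inequality.} Since $w\in H^1(B_R^+)$, for a.e.\ $\rho\in(r,R)$ the function $\theta\mapsto w(\rho,\theta)$ is absolutely continuous on $[0,\pi]$. Its endpoint values coincide with the boundary traces, $w(\rho,0)=b(\rho)$ and $w(\rho,\pi)=a(\rho)$. This is the one technical point to check: it follows from Fubini in polar coordinates together with the standard ACL characterization and the identification of the trace with the radial-slice limits. Cauchy--Schwarz on the semicircle then gives
\[
\int_0^\pi|\partial_\theta w|^2\,d\theta\ge \frac{|b(\rho)-a(\rho)|^2}{\pi}.
\]
Consequently
\[
\frac12\int_{\Psi(A^+_{r,R})}|\nabla u|^2\ \ge\ \frac{1}{2\pi}\int_r^R\frac{|b-a|^2}{\rho}\,d\rho\;-\;\frac{C}{2\pi}\int_r^R|b-a|^2\,d\rho .
\]
The last term is bounded by $C M^2 R$ because $|u|\le M$. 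This is the only place where the bound $M$ enters.

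\emph{Step 3: replace $b-a$ by $q_+-q_-$.} Put $\delta=q_+-q_-$ and $x=b-a$. The convexity inequality $|x|^2\ge|\delta|^2+2\delta\cdot(x-\delta)$ gives
\[
|b-a|^2\ge|\delta|^2-2|\delta|\bigl(|b-q_+|+|a-q_-|\bigr).
\]
Integrating against $d\rho/\rho$, the main term is $|\delta|^2\log(R/r)$. For the error term, Cauchy--Schwarz yields
\[
\int_r^R\frac{|b-q_+|+|a-q_-|}{\rho}\,d\rho\le \sqrt2\,E\Bigl(\int_r^\infty\rho^{-2}\,d\rho\Bigr)^{1/2}=\sqrt2\,E\,r^{-1/2}.
\]
Collecting terms produces \eqref{eq:local_annular_lower_bound}, with $C$ depending on $|q_+-q_-|$, $M$, $R$ and the chart constants. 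I expect the only real obstacle to be the trace identification in Step 2. The remaining steps are bookkeeping: keeping the Jacobian error linear in $\rho$ so that it costs $O(1)$ rather than a multiple of $\log(R/r)$.
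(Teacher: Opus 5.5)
Your proposal is correct and follows the paper's proof essentially step for step: you pull back through $\Psi$ with the metric bound $A(y)\xi\cdot\xi\ge(1-C\rho)|\xi|^2$ (shrinking $R$ if needed), keep only the angular derivative, apply the endpoint inequality $\int_0^\pi|\partial_\theta w|^2\,d\theta\ge|b-a|^2/\pi$ on a.e.\ semicircle, absorb the Jacobian error using $|u|\le M$, and then use $|b-a|^2\ge d^2-2d(|b-q_+|+|a-q_-|)$ with Cauchy--Schwarz to produce the $E r^{-1/2}$ term. The trace/slice identification you flag as the technical point is treated the same way in the paper, as a standard consequence of the trace theorem and Fubini in polar coordinates.
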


\begin{proof}[Proof of Lemma~\ref{lem:local_annular_lower_bound}]
The trace and slicing statements used below are standard consequences of applying the Sobolev trace theorem on
Lipschitz sectors and then using Fubini in polar coordinates. In particular, for a.e. $\rho\in(0,R)$ the angular slice
$\theta\mapsto w(\rho,\theta)$ belongs to $H^1(0,\pi)$, its endpoint values agree with the flat boundary traces of
$w$, and
\[
\int_r^R\int_0^\pi \rho^{-1}|\partial_\theta w(\rho,\theta)|^2\,d\theta\,d\rho<\infty .
\]

For $y=(s,t)=\rho(\cos\theta,\sin\theta)$, the change of variables gives
\[
\int_{\Psi(A_{r,R}^+)}|\nabla u|^2\,dx
=\int_{A_{r,R}^+}A(y)\nabla w(y)\cdot\nabla w(y)\,dy,
\]
where
\[
A(y)=|\det D\Psi(y)|\,D\Psi(y)^{-1}D\Psi(y)^{-T}.
\]
After composing $\Psi$ with a rigid motion, $A(0)=I$. The $C^{1,1}$ bounds give
$A(y)\xi\cdot\xi\ge(1-C\rho)|\xi|^2$. Reducing $R$ if necessary and keeping only the angular derivative,
\[
\int_{\Psi(A_{r,R}^+)}|\nabla u|^2\,dx
\ge
\int_r^R\frac{1-C\rho}{\rho}
\int_0^\pi |\partial_\theta w(\rho,\theta)|^2\,d\theta\,d\rho .
\]
The one-dimensional endpoint inequality on each angular slice gives
\[
\int_0^\pi |\partial_\theta w(\rho,\theta)|^2\,d\theta
\ge \frac{|b(\rho)-a(\rho)|^2}{\pi}.
\]
Therefore
\[
\int_{\Psi(A_{r,R}^+)}|\nabla u|^2\,dx
\ge
\int_r^R \frac{|b(\rho)-a(\rho)|^2}{\pi\rho}\,d\rho
-C\int_r^R |b(\rho)-a(\rho)|^2\,d\rho .
\]
The last integral is bounded by a constant depending only on $M$ and $R$. Let $d:=|q_+-q_-|$. Since
\[
b-a=(q_+-q_-)+(b-q_+)-(a-q_-),
\]
we have
\[
|b-a|^2\ge d^2-2d\left(|b-q_+|+|a-q_-|\right).
\]
By Cauchy's inequality,
\[
\int_r^R\frac{|b-q_+|+|a-q_-|}{\rho}\,d\rho
\le
C E\left(\int_r^R\rho^{-2}\,d\rho\right)^{1/2}
\le C E r^{-1/2}.
\]
Combining these estimates and dividing by two proves \eqref{eq:local_annular_lower_bound}.
\end{proof}

\section{Proofs}\label{sec:proofs}

We devote this section to the proofs of the main results. We begin with the proof of the singular-limit statement in
Theorem~\ref{thm:boundary_harmonic_selection}.

\subsection{Proof of Theorem~\ref{thm:boundary_harmonic_selection}}\label{subsec:proof-boundary-selection}

\begin{proof}[Proof of Theorem~\ref{thm:boundary_harmonic_selection}]
We divide the proof in three steps.
\setcounter{proofstep}{0}

\begin{step}{Mollified boundary traces and boundary control}
We first construct the only competitor needed for this proof. Under
\eqref{eq:boundary_decomp}--\eqref{eq:separation_condition}, there are
$\varepsilon_0\in(0,1)$ and $c,C>0$ such that for each
$0<\varepsilon<\varepsilon_0$ one can choose
$g^\varepsilon\in H^{1/2}(\Gamma_*)$ satisfying
\begin{equation}\label{eq:mollified_step_l2_hhalf}
\|g^\varepsilon-\Phi_0\|_{L^2(\Gamma_*)}^2\le C\varepsilon,
\qquad
\|g^\varepsilon\|_{H^{1/2}(\Gamma_*)}^2\le C|\log\varepsilon|,
\end{equation}
and, for every $0\le s<1/2$,
\begin{equation}\label{eq:mollified_step_hs}
\|g^\varepsilon-\Phi_0\|_{H^s(\Gamma_*)}\le C_s\varepsilon^{1/2-s}.
\end{equation}
Moreover, $g^\varepsilon=\Phi_0$ outside an $O(\varepsilon)$ neighborhood of
$\Sigma_*$ and $\phi_a\le g^\varepsilon\le\phi_c$. If
$\phi^\varepsilon:=H_*g^\varepsilon$, then
\begin{equation}\label{eq:mollified_step_energy}
\|\nabla\phi^\varepsilon\|_{L^2(\Omega)}^2\le C|\log\varepsilon|.
\end{equation}
Finally, every $\phi\in H^1(\Omega)$ satisfies the boundary-control estimate
\begin{equation}\label{eq:boundary_control_inline}
\kappa J_\kappa(\phi)\ge c\|\operatorname{Tr}\phi-\Phi_0\|_{L^2(\Gamma_*)}^2 .
\end{equation}

When junctions are present, the construction is by one-dimensional mollification of the boundary step around each
point of $\Sigma_*$, with the constant values preserved away from the mollification windows; see
Figure~\ref{fig:mollified_step}.

\begin{paperfigure}[t]
\centering
\begin{tikzpicture}[x=1.0cm,y=1.0cm,>=Latex]

\draw[->,thick] (-3.2,0) -- (3.35,0) node[right] {$s$};
\draw[->,thick] (-3.0,-0.25) -- (-3.0,2.65);

\draw[dashed,red,very thick] (-3,0.65) -- (0,0.65);
\draw[dashed,red,very thick] (0,0.65) -- (0,2.0);
\draw[dashed,red,very thick] (0,2.0) -- (3,2.0);

\draw[black,very thick]
  (-3,0.65) -- (-1,0.65)
  .. controls (-0.62,0.65) and (-0.55,0.82) .. (-0.34,1.02)
  .. controls (-0.10,1.25) and (0.10,1.40) .. (0.34,1.62)
  .. controls (0.55,1.82) and (0.62,2.0) .. (1,2.0)
  -- (3,2.0);

\draw[dotted,blue,thick] (-1,0) -- (-1,2.28);
\draw[dotted,blue,thick] (1,0) -- (1,2.28);
\node[below] at (-1,0) {$-\varepsilon$};
\node[below] at (1,0) {$\varepsilon$};
\node[below] at (0,0) {$0$};

\node[left] at (-3,0.65) {$\phi_a$};
\node[left] at (-3,2.0) {$\phi_c$};
\node[red] at (2.35,2.28) {$\Phi_0$};
\node[black] at (1.6,1.25) {$g^\varepsilon$};
\draw[<->,blue,thick] (-1,-0.68) -- (1,-0.68)
  node[midway,below,blue] {transition window};
\end{tikzpicture}
 \caption{Local smoothing of the limiting boundary step near one point of $\Sigma_*$. The trace is changed only on the window $|s|\le\varepsilon$, while it remains equal to the limiting values $\phi_a$ and $\phi_c$ outside this window.}
\label{fig:mollified_step}
\end{paperfigure}

If $\Sigma_*=\emptyset$, set $g^\varepsilon=\Phi_0$. Since $\Phi_0$ is then constant up to the endpoints of
each component of $\Gamma_*$, a fixed piecewise $H^1$ extension across the $\Gamma_N$ segments gives a
representative on $\partial\Omega$ with $H^{1/2}$ norm bounded independently of $\varepsilon$; the $L^2$ and
subcritical errors vanish.

Assume now that $\Sigma_*\ne\emptyset$. By the separation condition, choose pairwise disjoint arclength intervals
$(-2L_j,2L_j)$ around the jumps, contained in $\Gamma_*$ and disjoint from $\overline{\Gamma_N}$, with $L_j$
independent of $\varepsilon$. Then fix $\varepsilon_0$ so small that $2\varepsilon<L_j$ in every selected interval
whenever $0<\varepsilon<\varepsilon_0$. In the interval centered at a jump, write the local coordinate as $s=0$ and
replace the step by
\[
f^\varepsilon(s)=\phi_a+(\phi_c-\phi_a)\eta(s/\varepsilon),
\]
after interchanging $\phi_a,\phi_c$ if the orientation is reversed. Here $\eta\in C^\infty(\mathbb R)$ is nondecreasing,
$0\le\eta\le1$, $\eta=0$ on $(-\infty,-1]$, and $\eta=1$ on $[1,\infty)$. Away from the selected intervals we keep
the exact constant values on $\Gamma_*$. Thus
$g^\varepsilon$ changes $\Phi_0$ only in the union of transition intervals of total length $O(\varepsilon)$ and remains in
$[\phi_a,\phi_c]$. Since $|g^\varepsilon-\Phi_0|\le|\phi_c-\phi_a|$ pointwise, this gives the $L^2$ part of
\eqref{eq:mollified_step_l2_hhalf}.

The same local scaling gives the subcritical fractional approximation. For one transition, the difference
$f^\varepsilon-\Phi_0$ has the form $F(s/\varepsilon)$ on $(-\varepsilon,\varepsilon)$ and is zero outside, where
$F$ is a fixed bounded compactly supported step-profile difference. Since $F\in H^s(\mathbb R)$ for every
$s<1/2$,
\[
\|f^\varepsilon-\Phi_0\|_{H^s(-L,L)}\le C_s\varepsilon^{1/2-s},
\qquad 0\le s<1/2.
\]
The transition intervals are disjoint and finite in number, while cross terms between different intervals are
separated by a fixed distance. Summing over the transitions proves \eqref{eq:mollified_step_hs}.

It remains to record the $H^{1/2}$ cost. For one smoothed jump on a fixed interval $(-L,L)$, it is enough to estimate
the Gagliardo seminorm
\[
\iint_{(-L,L)^2}\frac{|f^\varepsilon(s)-f^\varepsilon(t)|^2}{|s-t|^2}\,ds\,dt
\le C|\log\varepsilon|.
\]
Split the integral into the regions $|s-t|\le\varepsilon$ and $|s-t|>\varepsilon$. On the first region,
$f^\varepsilon$ is $C/\varepsilon$-Lipschitz and is nonconstant only when at least one point lies in
$(-\varepsilon,\varepsilon)$. Hence
\[
\iint_{\substack{|s-t|\le\varepsilon\\ (-\varepsilon,\varepsilon)\cap\{s,t\}\ne\emptyset}}
\frac{|f^\varepsilon(s)-f^\varepsilon(t)|^2}{|s-t|^2}\,ds\,dt
\le
C\int_{-\varepsilon}^{\varepsilon}\int_{|h|\le\varepsilon}\frac{1}{\varepsilon^2}\,dh\,ds
\le C .
\]
On the second region, $|f^\varepsilon(s)-f^\varepsilon(t)|\le C$ and the only logarithmic contribution comes from
pairs lying on opposite sides of the transition, with separation at least $\varepsilon$. This gives
\[
C\int_{\varepsilon}^{2L}\frac{dr}{r}\le C|\log\varepsilon|.
\]
All other pairs either see the same constant value or are separated from the transition by a fixed distance, hence
contribute $O(1)$. Pairs belonging to two different transition-point neighborhoods are also separated by a fixed
distance and contribute only $O(1)$ because $g^\varepsilon$ is uniformly bounded.

To estimate the quotient norm, we construct one admissible representative on the full boundary. For each selected
transition interval choose a slightly larger arclength interval compactly contained in the same component of
$\Gamma_*$, and let $\chi_j$ be a fixed smooth cutoff which is equal to one on the transition interval and vanishes
outside the larger one. On $\Gamma_*$ we keep the values of $g^\varepsilon$. On the rest of $\partial\Omega$ we define
a function $G^\varepsilon$ as follows: on each $\Gamma_N$ segment, use a fixed piecewise $H^1$ interpolant joining the
two endpoint values inherited from the adjacent components of $\Gamma_*$. Thus
$G^\varepsilon|_{\Gamma_*}=g^\varepsilon$, so by the definition of the quotient norm,
\[
\|g^\varepsilon\|_{H^{1/2}(\Gamma_*)}
\le \|G^\varepsilon\|_{H^{1/2}(\partial\Omega)}.
\]

It remains only to bound the right-hand side. In arclength coordinates, multiplication by each fixed cutoff
$\chi_j$ is bounded on $H^{1/2}$, with a constant independent of $\varepsilon$. Hence the pieces containing the
smoothed transitions have the same $C|\log\varepsilon|$ squared $H^{1/2}$ bound obtained above. The remaining pieces
of $\Gamma_*$ are constant, while the interpolants on the $\Gamma_N$ segments are fixed piecewise $H^1$ functions; all
of these contribute $O(1)$ to the $H^{1/2}(\partial\Omega)$ norm. Finally, cross terms between distinct pieces are
uniformly bounded because their supports are either separated by a fixed positive arclength distance or the traces
match at the common endpoint through the chosen fixed interpolant. Therefore
\[
\|G^\varepsilon\|_{H^{1/2}(\partial\Omega)}^2\le C|\log\varepsilon|,
\]
and the quotient norm has the same bound. This proves the $H^{1/2}$ part of
\eqref{eq:mollified_step_l2_hhalf}.

The energy estimate \eqref{eq:mollified_step_energy} follows from
Lemma~\ref{lem:star_extension_poisson}, specifically \eqref{eq:star_extension_bound}, and
\eqref{eq:mollified_step_l2_hhalf}. To prove \eqref{eq:boundary_control_inline}, let $m_c,m_a>0$ be the lower
curvature constants supplied by Lemma~\ref{lem:quadratic_primitives}. Then
\[
\int_{\Gamma_c} I_c(\phi)\,ds+\int_{\Gamma_a} I_a(\phi)\,ds
\ge \frac12\min\{m_c,m_a\}\|\operatorname{Tr}\phi-\Phi_0\|_{L^2(\Gamma_*)}^2.
\]
Since $\Gamma_*=\Gamma_c\cup\Gamma_a$ up to finitely many endpoints and the Dirichlet term in $J_\kappa$ is
nonnegative, \eqref{eq:boundary_control_inline} follows after renaming the positive constant.
\end{step}

\begin{step}{The $L^2$ trace estimate}
Using the competitor $\phi^\varepsilon=H_*g^\varepsilon$ from Step 1,
\[
\kappa J_\kappa(\phi^\varepsilon)
\le C\kappa|\log\varepsilon|
+\int_{\Gamma_c}I_c(g^\varepsilon)\,ds+\int_{\Gamma_a}I_a(g^\varepsilon)\,ds.
\]
The boundary integrals vanish where $g^\varepsilon=\Phi_0$ because $I_c(\phi_c)=I_a(\phi_a)=0$. On the remaining arcs
the total length is $O(\varepsilon)$, and $I_a,I_c$ are bounded on $[\phi_a,\phi_c]$. Thus
$\kappa J_\kappa(\phi^\varepsilon)\le C(\kappa|\log\varepsilon|+\varepsilon)$. Minimality of $\phi_\kappa$ and the
boundary-control estimate \eqref{eq:boundary_control_inline} yield
\[
\|\operatorname{Tr}\phi_\kappa-\Phi_0\|^2_{L^2(\Gamma_*)}
\le C\bigl(\kappa|\log\varepsilon|+\varepsilon\bigr).
\]
Taking $\varepsilon=\kappa$ gives the $s=0$ estimate
\begin{equation}\label{convrategood}
\|\operatorname{Tr}\phi_\kappa-\Phi_0\|^2_{L^2(\Gamma_*)}\le C\kappa|\log\kappa|.
\end{equation}
The same comparison also gives
\[
J_\kappa(\phi_\kappa)\le C|\log\kappa|.
\]
The Dirichlet part of this energy bound, the trace theorem, and the range bound from
Lemma~\ref{lem:variational_tools}(i) imply
\[
\|\operatorname{Tr}\phi_\kappa\|_{L^2(\Gamma_*)}\le C,\qquad
\|\operatorname{Tr}\phi_\kappa\|_{H^{1/2}(\partial\Omega)}\le C|\log\kappa|^{1/2}.
\]
\end{step}

\begin{step}{Upgrade to subcritical trace norms}
Fix $0<s<1/2$, and let $g^\kappa$ be the mollified trace from Step 1 with $\varepsilon=\kappa$. Write
\[
\operatorname{Tr}\phi_\kappa-\Phi_0
=\bigl(\operatorname{Tr}\phi_\kappa-g^\kappa\bigr)+\bigl(g^\kappa-\Phi_0\bigr).
\]
\eqref{eq:mollified_step_l2_hhalf}--\eqref{eq:mollified_step_hs} give
\[
\|g^\kappa-\Phi_0\|_{H^s(\Gamma_*)}\le C_s\kappa^{1/2-s},
\qquad
\|g^\kappa\|_{H^{1/2}(\Gamma_*)}\le C|\log\kappa|^{1/2}.
\]
Moreover, by \eqref{convrategood} and the $L^2$ part of \eqref{eq:mollified_step_l2_hhalf},
\[
\|\operatorname{Tr}\phi_\kappa-g^\kappa\|_{L^2(\Gamma_*)}
\le C(\kappa|\log\kappa|)^{1/2},
\]
and the preceding trace bound gives
\[
\|\operatorname{Tr}\phi_\kappa-g^\kappa\|_{H^{1/2}(\Gamma_*)}
\le C|\log\kappa|^{1/2}.
\]
By real interpolation on quotient spaces \cite{LionsMagenes1972,AdamsFournier2003} and the endpoint extension result
for the subcritical range \cite{hitch}, this last difference satisfies
\[
\|\operatorname{Tr}\phi_\kappa-g^\kappa\|_{H^s(\Gamma_*)}
\le
C_s
\|\operatorname{Tr}\phi_\kappa-g^\kappa\|_{L^2(\Gamma_*)}^{1-2s}
\|\operatorname{Tr}\phi_\kappa-g^\kappa\|_{H^{1/2}(\Gamma_*)}^{2s}
\le C_s\kappa^{1/2-s}|\log\kappa|^{1/2}.
\]
Combining the last two estimates gives \eqref{eq:hs_boundary_convergence}. The case $s=0$ is exactly
\eqref{convrategood}. This proves convergence in $H^s(\Gamma_*)$ for every $0\le s<1/2$.
\end{step}
\end{proof}

\subsection{Proof of Corollary~\ref{cor:interior_selection}}\label{subsec:proof-interior-selection}

\begin{proof}[Proof of Corollary~\ref{cor:interior_selection}]
Let $g_\kappa:=\operatorname{Tr}\phi_\kappa-\Phi_0$ on $\Gamma_*$. Since
$u_0=\mathcal P_*\Phi_0$ and
$\phi_\kappa=\mathcal P_*(\operatorname{Tr}\phi_\kappa)$, linearity of the Poisson extension from
Lemma~\ref{lem:star_extension_poisson} gives
\[
\phi_\kappa-u_0=\mathcal P_*g_\kappa .
\]
Indeed, both sides are harmonic in $\Omega$, have trace $g_\kappa$ on $\Gamma_*$, and satisfy the same homogeneous
Neumann condition on $\Gamma_N$; uniqueness holds in the Poisson class fixed in
Lemma~\ref{lem:star_extension_poisson}.
The smoothing estimate \eqref{eq:poisson_l2_smoothing} from Lemma~\ref{lem:star_extension_poisson} gives
\[
\|\mathcal P_*g_\kappa\|_{C^m(K)}\le C_{K,m}\|g_\kappa\|_{L^2(\Gamma_*)}.
\]
The boundary estimate \eqref{convrategood}, with the dependence recorded in Theorem~\ref{thm:boundary_harmonic_selection}, gives the stated
rate. The $C^m_{\mathrm{loc}}$ convergence follows from this estimate. For the local $H^1$ convergence, fix
$K\cc K'\cc\Omega$ and set $w_\kappa:=\phi_\kappa-u_0$. Since $w_\kappa$ is harmonic in $K'$, the interior
Caccioppoli estimate gives; see \cite{GilbargTrudinger2001},
\[
\int_K |\nabla w_\kappa|^2\,dx
\le \frac{C}{\operatorname{dist}(K,\partial K')^2}
\int_{K'} |w_\kappa|^2\,dx .
\]
The right-hand side tends to zero by the already proved local $L^2$ convergence on $K'$, and therefore
$\phi_\kappa\to u_0$ in $H^1(K)$.
\end{proof}

We next turn from trace selection to the sharp logarithmic expansion of the energy. The proof of
Theorem~\ref{thm:energy_scale} uses the same local boundary charts, now with matching upper and lower half-annular
estimates near the $\Gamma_c$--$\Gamma_a$ junctions.

\subsection{Proof of Theorem~\ref{thm:energy_scale}}\label{subsec:proof-energy-scale}

\begin{proof}[Proof of Theorem~\ref{thm:energy_scale}]
We divide the proof in four steps.
\setcounter{proofstep}{0}

\begin{step}{Reduction and charts}
Let $N=\#\Sigma_*$. If $N=0$, the absence of points where $\Gamma_a$ and $\Gamma_c$ meet and the separation
condition allow an $H^1$ competitor whose trace on $\Gamma_a\cup\Gamma_c$ is exactly the limiting trace
$\Phi_0$. Its boundary primitive vanishes, so $J_\kappa(\phi_\kappa)=O(1)$, which is
$O(\log|\log\kappa|)$ for small $\kappa$. We therefore assume $N>0$ and put
\[
C_N:=\frac{N(\phi_c-\phi_a)^2}{2\pi}.
\]
At the $\Gamma_c$--$\Gamma_a$ junctions, choose one radius $R>0$ such that the flattening neighborhoods
$\Psi_j(B_R^+)$ are pairwise disjoint, avoid $\Gamma_N$, and have uniformly bounded $C^{1,1}$ norms and inverse
$C^{1,1}$ norms. After a rigid motion in the reference variables, each $D\Psi_j(0)$ is the identity. Shrinking $R$
if necessary, the pullback metric matrices below satisfy the uniform estimates used in the lower-bound step. All
constants are uniform in $j$ because only finitely many charts are used.
\end{step}

\begin{step}{Upper-bound competitors}
We claim that there are constants $\varepsilon_0,C>0$ such that, for every
$0<\varepsilon<\varepsilon_0$, one can find $v^\varepsilon\in H^1(\Omega)$ with
$\phi_a\le v^\varepsilon\le\phi_c$ a.e., whose trace differs from $\Phi_0$ on $\Gamma_a\cup\Gamma_c$ only inside an
$O(\varepsilon)$ neighborhood of $\Sigma_*$, and such that
\begin{equation}\label{eq:exact_upper_dirichlet}
\frac12\int_\Omega |\nabla v^\varepsilon|^2\,dx
\le C_N|\log\varepsilon|+C,
\end{equation}
\begin{equation}\label{eq:exact_upper_boundary}
\int_{\Gamma_c}I_c(v^\varepsilon)\,ds+\int_{\Gamma_a}I_a(v^\varepsilon)\,ds\le C\varepsilon .
\end{equation}

In a fixed transition-point neighborhood, write $q_+$ and $q_-$ for the values of $\Phi_0$ on the positive and
negative boundary rays. Then $|q_+-q_-|=\phi_c-\phi_a$. In polar coordinates $(\rho,\theta)$ on $B_R^+$, define
\[
P(\rho,\theta):=q_++\frac{q_--q_+}{\pi}\theta,\qquad 0<\theta<\pi.
\]
This profile is harmonic, takes the endpoint values $q_+$ and $q_-$ on the two boundary rays, and satisfies
$|\nabla P|=(\phi_c-\phi_a)/(\pi\rho)$. The corresponding flattened half-annular construction is shown in Figure~\ref{fig:half_annulus_profile}.

\begin{paperfigure}[t]
\centering
\begin{tikzpicture}[x=1.05cm,y=1.05cm,>=Latex,font=\small]
\def\r{0.82}
\def\R{2.28}

\path[fill=blue!5,draw=none,even odd rule]
  (-\R,0) arc[start angle=180,end angle=0,radius=\R] -- cycle
  (-\r,0) arc[start angle=180,end angle=0,radius=\r] -- cycle;

\draw[very thick] (-\R,0) arc[start angle=180,end angle=0,radius=\R];
\draw[very thick] (-\r,0) arc[start angle=180,end angle=0,radius=\r];
\draw[very thick,red] (-\R,0) -- (-\r,0);
\draw[very thick,blue] (\r,0) -- (\R,0);

\foreach \ang in {25,50,75,105,130,155} {
  \draw[blue!40,line width=0.55pt]
    ({\r*cos(\ang)},{\r*sin(\ang)})
    -- ({\R*cos(\ang)},{\R*sin(\ang)});
}

\draw[->,thick]
  ({(\r+0.03)*cos(32)},{(\r+0.03)*sin(32)})
  -- ({(\R-0.08)*cos(32)},{(\R-0.08)*sin(32)});
\node[anchor=south] at ({1.55*cos(32)},{1.55*sin(32)+0.18}) {$\rho$};

\draw[->,thick] (0.66,0) arc[start angle=0,end angle=72,radius=0.66];
\node[anchor=north] at ({0.66*cos(72)},{0.66*sin(72)-0.08}) {$\theta$};

\node[align=center] at (0,2.72)
  {$P(\rho,\theta)=q_+ +\dfrac{q_- - q_+}{\pi}\theta$};
\node[anchor=south east] at ({\R*cos(132)-0.05},{\R*sin(132)+0.08}) {$\rho=R$};

\node[below=7pt,red] at (-1.55,0) {$q_-$ on $\Gamma_a$};
\node[below=7pt,blue] at (1.55,0) {$q_+$ on $\Gamma_c$};
\node at (-0.18,-0.28) {$\rho=\varepsilon$};
\node at (0,-0.82) {$A_{\varepsilon,R}^+$};
\end{tikzpicture}
 \caption{Flattened half-annular profile near a smooth boundary jump. The flat boundary carries the two limiting values $q_-$ and $q_+$, and the angular harmonic interpolation on $\varepsilon<\rho<R$ gives the logarithmic Dirichlet contribution.}
\label{fig:half_annulus_profile}
\end{paperfigure}

Let $\chi_\varepsilon$ be a smooth radial cutoff with $\chi_\varepsilon=0$ on $\rho\le\varepsilon$,
$\chi_\varepsilon=1$ on $\rho\ge2\varepsilon$, and $|\chi_\varepsilon'|\le C/\varepsilon$. Put
$\bar q=(q_++q_-)/2$ and
\[
P^\varepsilon:=\bar q+\chi_\varepsilon(\rho)(P-\bar q)
\qquad\text{in }B_{R/2}^+.
\]
Then $P^\varepsilon\in H^1(B_{R/2}^+)$, stays in $[\phi_a,\phi_c]$, and differs from the limiting trace on the
flat boundary only for $|s|\le 2\varepsilon$. The energy in
$B_{2\varepsilon}^+\setminus B_\varepsilon^+$ is bounded uniformly in $\varepsilon$: indeed
$|\nabla P|\le C/\rho$, $|\chi_\varepsilon'|\le C/\varepsilon$, and $|P-\bar q|\le C$, so the two terms in
\[
\nabla P^\varepsilon=\chi_\varepsilon\nabla P+\chi_\varepsilon'(P-\bar q)e_\rho
\]
have square integrals bounded by constants over an annulus of radii comparable to $\varepsilon$. On
$B_{R/2}^+\setminus B_{2\varepsilon}^+$ one has $P^\varepsilon=P$, and therefore
\[
\frac12\int_{B_{R/2}^+\setminus B_{2\varepsilon}^+}|\nabla P^\varepsilon|^2
=\frac{(\phi_c-\phi_a)^2}{2\pi}\log\frac{R}{4\varepsilon}.
\]
On the fixed annulus $B_R^+\setminus B_{R/2}^+$, choose once and for all a connector
$W\in H^1(B_R^+\setminus B_{R/2}^+)$ whose trace equals $P(R/2,\theta)$ on the inner semicircle, equals
$q_-$ and $q_+$ on the negative and positive flat boundary segments, and is smooth on the outer semicircle. Such a
connector follows from the trace extension theorem on the fixed Lipschitz half-annulus, and its energy is independent
of $\varepsilon$.

Outside the union of the transition-point neighborhoods, remove the pulled-back half-disks $\Psi_j(B_R^+)$ and work
on the remaining fixed Lipschitz domain. The boundary data there consist of the exact constants $\phi_a,\phi_c$ on
$\Gamma_a$ and $\Gamma_c$, the fixed traces supplied on the outer semicircles by the connectors, and arbitrary fixed
bounded traces on $\Gamma_N$. These data are mutually compatible at the endpoints of each transition-point
neighborhood because the connectors preserve the flat-side values $q_\pm$. A fixed trace-extension theorem on this
fixed perforated domain gives an exterior function with $O(1)$ energy, uniformly in $\varepsilon$.

It remains to pass from the flattened half-disk to $\Omega$. If $y=(s,t)$ and $x=\Psi_j(y)$, then the Dirichlet
integral of a pullback profile has the form
\[
\int_{\Psi_j(E)} |\nabla_x v|^2\,dx
=\int_E A_j(y)\nabla_y(v\circ\Psi_j)\cdot\nabla_y(v\circ\Psi_j)\,dy,
\]
for each reference subdomain $E\subset B_R^+$, where $A_j(0)=I$ because $D\Psi_j(0)$ is orthogonal. The
$C^{1,1}$ regularity gives $|A_j(y)-I|\le C|y|$. Therefore the metric error on the principal annulus is bounded by
\[
C\int_{2\varepsilon}^{R/2}\rho\,\frac{(\phi_c-\phi_a)^2}{\rho^2}\,\rho\,d\rho\le C,
\]
and the fixed inner and outer annuli remain $O(1)$. Summing over the disjoint transition-point neighborhoods proves
\eqref{eq:exact_upper_dirichlet}. The boundary primitive is zero wherever the trace equals $\Phi_0$; its support on
the part of $\Gamma_a\cup\Gamma_c$ where the trace is modified has total length $O(\varepsilon)$, and $I_a,I_c$ are
bounded on $[\phi_a,\phi_c]$. This proves \eqref{eq:exact_upper_boundary}.

Minimality and the choice $\varepsilon=\kappa$ now give
\[
J_\kappa(\phi_\kappa)
\le C_N|\log\kappa|+C.
\]
\end{step}

\begin{step}{Lower bound for traces}
We next prove the converse estimate needed for the minimizer. Suppose $u_\kappa\in H^1(\Omega)$ satisfies
$\phi_a\le u_\kappa\le\phi_c$ a.e. and, for some fixed $B>0$,
\[
\|\operatorname{Tr}u_\kappa-\Phi_0\|_{L^2(\Gamma_*)}^2\le B\kappa|\log\kappa|.
\]
Then, for all sufficiently small $\kappa$,
\begin{equation}\label{eq:exact_lower_annuli}
\frac12\int_\Omega|\nabla u_\kappa|^2\,dx
\ge C_N|\log\kappa|-C\log|\log\kappa|.
\end{equation}

Set
\[
e_\kappa^2:=\|\operatorname{Tr}u_\kappa-\Phi_0\|_{L^2(\Gamma_*)}^2,
\qquad
r_\kappa:=\kappa|\log\kappa|^2 .
\]
Then $e_\kappa^2/r_\kappa\le B/|\log\kappa|$. Use the charts fixed in Step 1. Their boundary Jacobians are uniformly
comparable to one, so $L^2$ estimates on the boundary and in flat coordinates differ only by fixed
multiplicative constants.

Fix one transition point. On the positive and negative flat boundary rays let $q_+$ and $q_-$ be the corresponding
values of $\Phi_0$, so $|q_+-q_-|=\phi_c-\phi_a$. Define
\[
a_{\kappa,j}(\rho):=\operatorname{Tr}(u_\kappa\circ\Psi_j)(-\rho,0),\qquad
b_{\kappa,j}(\rho):=\operatorname{Tr}(u_\kappa\circ\Psi_j)(\rho,0).
\]
The global trace estimate gives the local bound
\[
E_{\kappa,j}^2:=
\int_0^R\left(|a_{\kappa,j}(\rho)-q_-|^2
+|b_{\kappa,j}(\rho)-q_+|^2\right)\,d\rho
\le C e_\kappa^2 .
\]
By the range assumption, $|u_\kappa|$ is uniformly bounded by a constant depending only on
$\phi_a,\phi_c$. Applying Lemma~\ref{lem:local_annular_lower_bound} with $r=r_\kappa$ gives
\[
\frac12\int_{\Psi_j(A_{r_\kappa,R}^+)}|\nabla u_\kappa|^2\,dx
\ge
\frac{(\phi_c-\phi_a)^2}{2\pi}\log\frac{R}{r_\kappa}
-C\left(1+E_{\kappa,j}r_\kappa^{-1/2}\right).
\]
Since $E_{\kappa,j}r_\kappa^{-1/2}\le C|\log\kappa|^{-1/2}$, the error term is bounded uniformly in $\kappa$.
Therefore
\[
\frac12\int_{\Psi_j(A_{r_\kappa,R}^+)}|\nabla u_\kappa|^2\,dx
\ge
\frac{(\phi_c-\phi_a)^2}{2\pi}\log\frac{R}{r_\kappa}-C .
\]
The annular neighborhoods are disjoint for different $j$, and
\[
\log\frac{R}{r_\kappa}=|\log\kappa|-2\log|\log\kappa|+\log R .
\]
Summing over the $N$ points in $\Sigma_*$ proves \eqref{eq:exact_lower_annuli}.
\end{step}

\begin{step}{Conclusion for the minimizer}
Theorem~\ref{thm:boundary_harmonic_selection} gives
\[
\|\operatorname{Tr}\phi_\kappa-\Phi_0\|_{L^2(\Gamma_*)}^2\le C\kappa|\log\kappa|,
\]
By the range bound from
Lemma~\ref{lem:variational_tools}(i), Step 3 applies with $u_\kappa=\phi_\kappa$. Since the boundary part of
$J_\kappa$ is nonnegative,
\[
J_\kappa(\phi_\kappa)
\ge C_N|\log\kappa|-C\log|\log\kappa|.
\]
Combining this with the upper bound from Step 2 proves \eqref{eq:exact_log_constant}.
\end{step}
\end{proof}

\begin{remark}[Mesh scale near smooth boundary jumps]\label{rem:mesh_scale}
The sharp upper-bound construction in the proof of Theorem~\ref{thm:energy_scale} also identifies the inner length
scale in the local construction. Each jump point has a small $\Gamma_*$-neighborhood disjoint from
$\overline{\Gamma_N}$, so the calculation is local. Near a smooth point where $\Gamma_a$ and $\Gamma_c$ meet, flatten
the boundary and write the two limiting trace values as $q_-$ and $q_+$. For a length scale $\delta$, use in the
half-disk the angular interpolation
\[
P(\rho,\theta)=q_++\frac{q_--q_+}{\pi}\theta,\qquad 0<\theta<\pi,
\]
on the annulus $\delta<\rho<R$, cut it off to the average $(q_-+q_+)/2$ on $\rho\le\delta$, and connect it to fixed
outer data on $\rho=R$. Repeating that construction with the inner cutoff radius $\delta$ and keeping the exact
limiting trace away from the $\delta$-neighborhood of $\Sigma_*$ gives competitors $v_\kappa^\delta$ with principal
Dirichlet contribution
\begin{equation}\label{eq:mesh_rule_dirichlet}
\frac12\int_\Omega |\nabla v_\kappa^{\delta}|^2\,dx
=\frac{N(\phi_c-\phi_a)^2}{2\pi}\log\frac{1}{\delta}+O(1),
\end{equation}
with $O(1)$ uniform in $\delta$. The boundary integral terms vanish on
$\{|s|\ge \delta\}\cap\Gamma_*$ since the trace equals $\Phi_0$ there, and on $\{|s|<\delta\}$ they are controlled by
the bounded primitives $I_a,I_c$ on $[\phi_a,\phi_c]$, giving
\begin{equation}\label{eq:mesh_rule_bv}
\frac1\kappa\!\left(\int_{\Gamma_c\cap\{|s|<\delta\}}\!\!\!I_c(v_\kappa^{\delta})\,ds
+\int_{\Gamma_a\cap\{|s|<\delta\}}\!\!\!I_a(v_\kappa^{\delta})\,ds\right)
\le \frac{C\delta}{\kappa}.
\end{equation}
This balance should be read locally. At one smooth jump the same construction gives the reduced cost
\[
F_{\kappa,j}(\delta)
:=\frac{(\phi_c-\phi_a)^2}{2\pi}\log\frac{1}{\delta}
+\frac{C_j\delta}{\kappa}+O(1),
\]
where $C_j>0$ depends on the chosen inner cutoff, the functions $I_a,I_c$, and the chart, but not on
$\kappa$ or $\delta$. This reduced cost has a stationary point satisfying
\[
\delta_{\kappa,j}^{\,*}=\frac{(\phi_c-\phi_a)^2}{2\pi C_j}\,\kappa .
\]
Thus the balance fixes the local order of the transition scale, not a universal prefactor. Summing over the finitely many
junctions gives the sharp leading logarithmic constant in Theorem~\ref{thm:energy_scale}; each smooth jump
retains the same local length scale $\delta_{\kappa,j}^{\,*}\simeq\kappa$.

For a $P^1$ discretization this provides a mesh-design heuristic: $\kappa$-robust grids should use boundary elements
of size $h_{\min}\approx\kappa$ near each smooth point where $\Gamma_a$ and $\Gamma_c$ meet and grade away from
$\Sigma_*$.
\end{remark}
 % !TEX root = ../main.tex
\section{Numerical validation}\label{sec:numerics}

We perform finite-element experiments to test the boundary convergence, interior selection, sharp energy coefficient, and mesh rule from Subsection~\ref{subsec:singular-limit-theorems}.

\subsection{Test problems and discrete setup}\label{subsec:numerics_setup}

Unless otherwise stated, the numerical values of the electrochemical parameters are those in
Table~\ref{tab:numerical_parameters}.

\begin{table}[htbp]
\centering
\caption{Baseline electrochemical parameters used in the numerical benchmarks.}
\label{tab:numerical_parameters}
\begin{tabular}{lll}
\toprule
Parameter & Value & Role \\
\midrule
$\phi_c$ & $0.2$ & cathodic equilibrium potential \\
$\phi_a$ & $-0.2$ & anodic equilibrium potential \\
$i_{c,0}$ & $3.0\times10^{-4}$ & cathodic exchange-current scale \\
$i_{a,0}$ & $3.0\times10^{-2}$ & anodic exchange-current scale \\
$C_1=A_2$ & $83.13$ & positive exponential slopes \\
$C_2=A_1$ & $35.63$ & negative exponential slopes \\
\bottomrule
\end{tabular}
\end{table}

Let $V_h$ be the conforming $P_1$ finite-element space on the corresponding triangular mesh. The computed solution
$\phi_{\kappa,h}\in V_h$ is the minimizer of
\begin{equation}\label{eq:Jkh}
J_{\kappa,h}(v_h):=
\frac12\int_\Omega |\nabla v_h|^2\,dx
+ \frac1\kappa\left(\int_{\Gamma_c} I_c(v_h)\,ds+\int_{\Gamma_a} I_a(v_h)\,ds\right),
\qquad v_h\in V_h.
\end{equation}
Strict convexity gives a unique discrete minimizer.
The mixed harmonic reference $u_{0,h}^{\mathrm{mix}}$ is computed on the same mesh as the nonlinear solution.

In the experiments, we report
\[
C_N:=\frac{N(\phi_c-\phi_a)^2}{2\pi},
\]
where $N$ is the number of smooth points where $\Gamma_a$ and $\Gamma_c$ meet in the benchmark, and
\[
\begin{aligned}
R_b(\kappa)&:=
\frac{\|\phi_{\kappa,h}-\Phi_0\|^2_{L^2(\Gamma_*)}}{\kappa|\log\kappa|},\\
R_E(\kappa)&:=
\frac{J_{\kappa,h}(\phi_{\kappa,h})}{C_N|\log\kappa|},\\
E_K(\kappa)&:=
\|\phi_{\kappa,h}-u_{0,h}^{\mathrm{mix}}\|_{L^2(K)},
\end{aligned}
\]
measure the boundary, energy, and interior diagnostics, with the compact set $K$ specified in the corresponding
benchmark.

The nonlinear systems are solved by damped Newton from $\kappa=1$ downward, with sparse direct solves and an L-BFGS-B fallback only when Newton stagnates. A point is certified if
\[
\|\nabla I(\lambda_h)\|_{\ell^\infty}\le
10^{-9}\max\{1,\|\nabla I(\lambda_h^{(0)})\|_{\ell^\infty}\},
\]
where $\lambda_h^{(0)}$ is the initial guess. The $P_1$ Dirichlet term is assembled exactly on each triangle, and the nonlinear boundary terms are evaluated by a
four-point Gauss--Legendre rule on each reactive boundary edge. Newton uses an Armijo backtracking line search with
tolerance $10^{-10}$ and step tolerance $10^{-12}$; the L-BFGS-B fallback uses the same box
$[\phi_a,\phi_c]$ and a maximum of $4000$ iterations. The experiments were run with Python~3.13.9.

\subsection{Single boundary-jump diagnostics}\label{subsec:numerics_single}

The first benchmark isolates boundary and interior convergence on the rectangle
\[
L_x=0.02,\qquad L_y=0.01,\qquad
\Omega_{\mathrm{rec}}=[0,L_x]\times[0,L_y],
\qquad
\Gamma_*=[0,L_x]\times\{0\},
\]
with homogeneous Neumann data on the other three sides. The bottom side is split into one cathodic interval and one
anodic interval,
\[
\Gamma_c=[0,L_x/2]\times\{0\},\qquad
\Gamma_a=[L_x/2,L_x]\times\{0\},
\]
up to the common endpoint, so $N=1$. For the interior diagnostic in this single-jump benchmark we use
\[
K=[0.25L_x,0.75L_x]\times[0.25L_y,0.75L_y].
\]

Figure~\ref{fig:kappa_evolution_3d} shows $\phi_{\kappa,h}$ at $\kappa=10^{-3}$, $10^{-5}$, and $10^{-7}$. The boundary transition sharpens near the jump point, while the interior field approaches the harmonic limit.

\begin{paperfigure}[htbp]
  \centering
  \includegraphics[width=\textwidth]{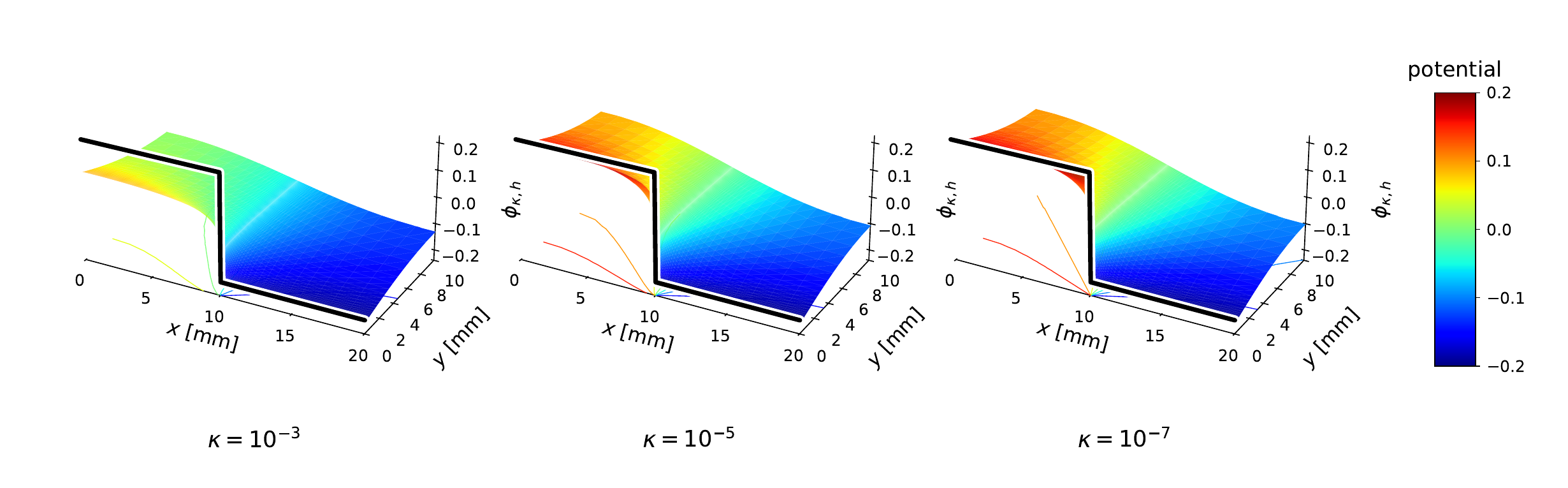}
  \caption{Three-dimensional single-jump evolution. From \textbf{left} to \textbf{right}, the finite-element potential sheet
  $\phi_{\kappa,h}(x,y)$ at $\kappa=10^{-3}$, $10^{-5}$, and $10^{-7}$, with projected level curves on the base
  plane. The black curve along $y=0$ marks the prescribed step boundary datum $\Phi_0$.}
  \label{fig:kappa_evolution_3d}
\end{paperfigure}

Figure~\ref{fig:theorem_dashboard} provides the quantitative checks. The $L^2$ boundary-error slope is $1.065$, consistent with the $s=0$ part of Theorem~\ref{thm:boundary_harmonic_selection}, namely bounded $R_b(\kappa)$. The sampled $C^0(K)$ and $C^1(K)$ slopes are $0.798$ and $0.725$ in this window; these faster observed decays are consistent with the scale from Corollary~\ref{cor:interior_selection}.

\begin{paperfigure}[htbp]
  \centering
  \includegraphics[width=1.0\textwidth]{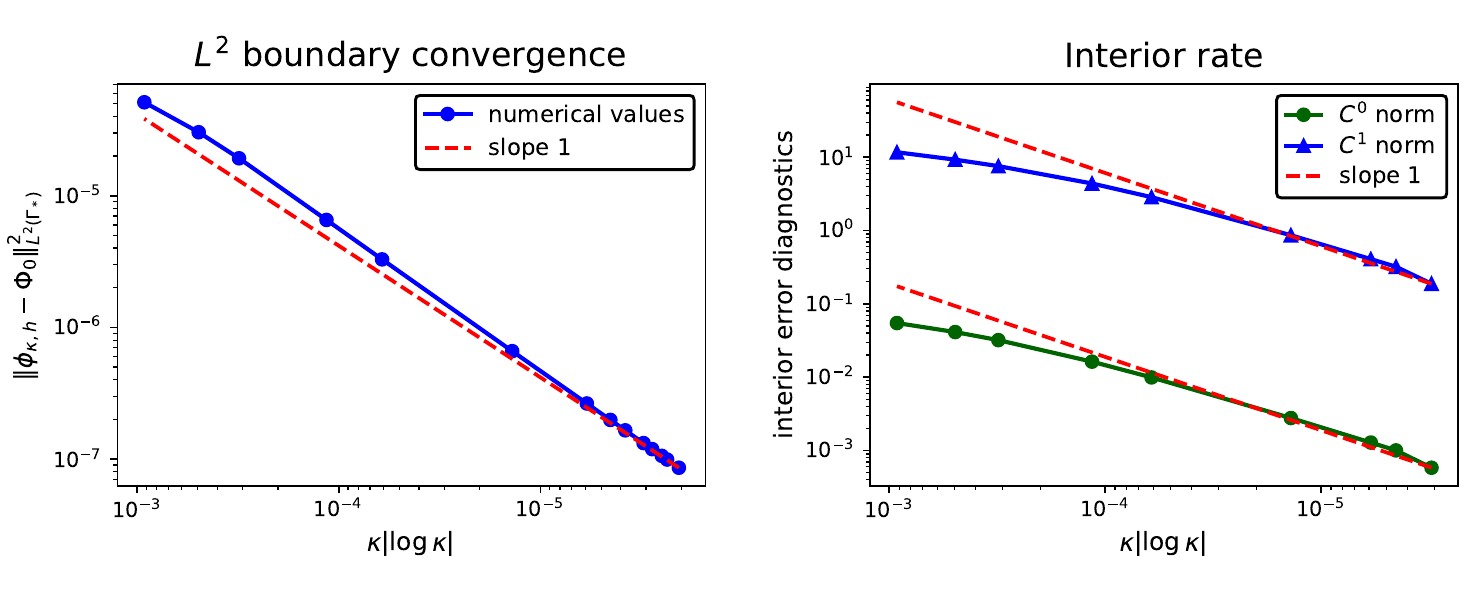}
  \caption{Single-rectangle theorem dashboard. \textbf{Left}: the $s=0$ boundary convergence diagnostic from Theorem~\ref{thm:boundary_harmonic_selection}. \textbf{Right}:
  sampled $C^0$ and $C^1$ diagnostics from Corollary~\ref{cor:interior_selection} on
  $K=[0.25L_x,0.75L_x]\times[0.25L_y,0.75L_y]$. Solid curves connect the certified numerical values; dashed lines are
  slope-one guides anchored to the data.}
  \label{fig:theorem_dashboard}
\end{paperfigure}

\subsection{Multiple boundary-jump diagnostics}\label{subsec:numerics_multi_corrugated}

These tests check whether the normalized leading coefficient is local in the number of smooth jump points and
insensitive to moderate changes in the zero-flux part of the geometry. The rectangular multi-jump benchmark uses the
same rectangle $\Omega_{\mathrm{rec}}$ and bottom side $\Gamma_*$ as the single-jump test, but with four alternating
intervals of equal length,
\[
[0,L_x/4],\quad [L_x/4,L_x/2],\quad [L_x/2,3L_x/4],\quad [3L_x/4,L_x],
\]
so that there are three interior cathode-anode jump points. The nonrectangular tests use the corrugated-top domain
\begin{align*}
\Omega_{\mathrm{cor}}&=\{(x,y):0<x<L_x,\ 0<y<H(x)\},\\
H(x)&=L_y\left(1+0.22\sin\frac{2\pi x}{L_x}
+0.10\sin\left(\frac{4\pi x}{L_x}+0.5\right)\right),
\end{align*}
with reactions imposed on the bottom side and homogeneous Neumann data on the remaining boundary. The corrugated
$N=4$ and $N=6$ tests use alternating $\Gamma_c$ and $\Gamma_a$ intervals on the bottom side, with jump locations
\[
0.18L_x,\ 0.39L_x,\ 0.60L_x,\ 0.81L_x
\]
and
\[
0.12L_x,\ 0.27L_x,\ 0.40L_x,\ 0.55L_x,\ 0.70L_x,\ 0.84L_x,
\]
respectively.
The corrugated domains are included as nonrectangular curvilinear-polygonal tests within the geometric setting of the
theory; the cathode-anode junctions themselves remain smooth flat-boundary points, separated from the zero-flux
boundary, so they preserve the local structure that determines the logarithmic coefficient.

The corrugated experiment with four jump points is closer to the limiting regime, reaching
$R_E=0.938$ at $\kappa=5\times10^{-6}$. Figure~\ref{fig:corrugated_six_diagnostics} gives the corrugated $N=6$ run;
its certified ratio reaches $0.943$ at $\kappa=3\times10^{-7}$.

\begin{paperfigure}[htbp]
  \centering
  \includegraphics[width=1.0\textwidth]{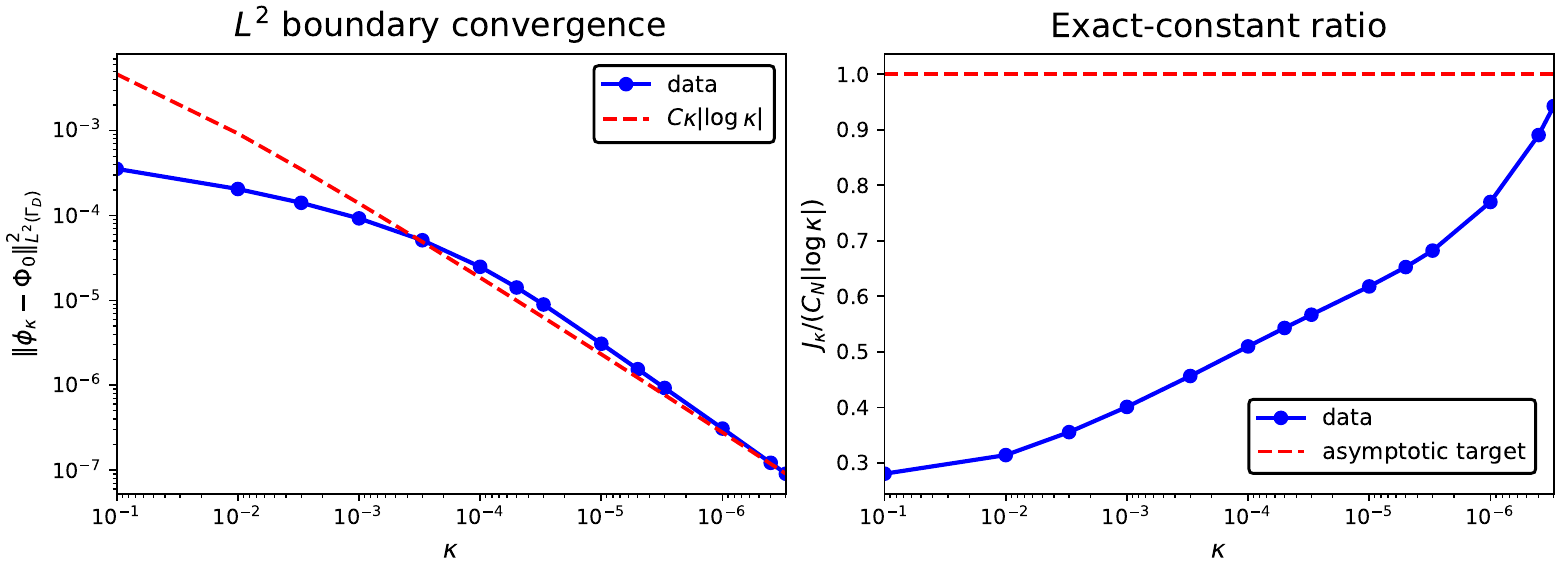}
  \caption{Corrugated six-jump diagnostics. \textbf{Left}: $L^2$ boundary convergence to $\Phi_0$ over the admissible data. \textbf{Right}: the normalized exact-constant ratio in the theorem-facing window. The retained points show the finite-window approach of $R_E(\kappa)$ toward the predicted unit limit while excluding smaller runs that fail the residual or range certificate.}
  \label{fig:corrugated_six_diagnostics}
\end{paperfigure}

Figure~\ref{fig:practical_stainless_zro2_diagnostics} repeats the corrugated tests with parameter ratios from a ZrO$_2$-coated stainless-steel study \cite{PengJiShiWang2021}. The retained data reach $R_E=0.978$ for $N=4$ and $R_E=0.950$ for $N=6$ at $\kappa=5\times10^{-7}$; the next point is excluded by the residual certificate. These parameter-variant runs are therefore used only as robustness checks.

\begin{paperfigure}[htbp]
  \centering
  \includegraphics[width=1.0\textwidth]{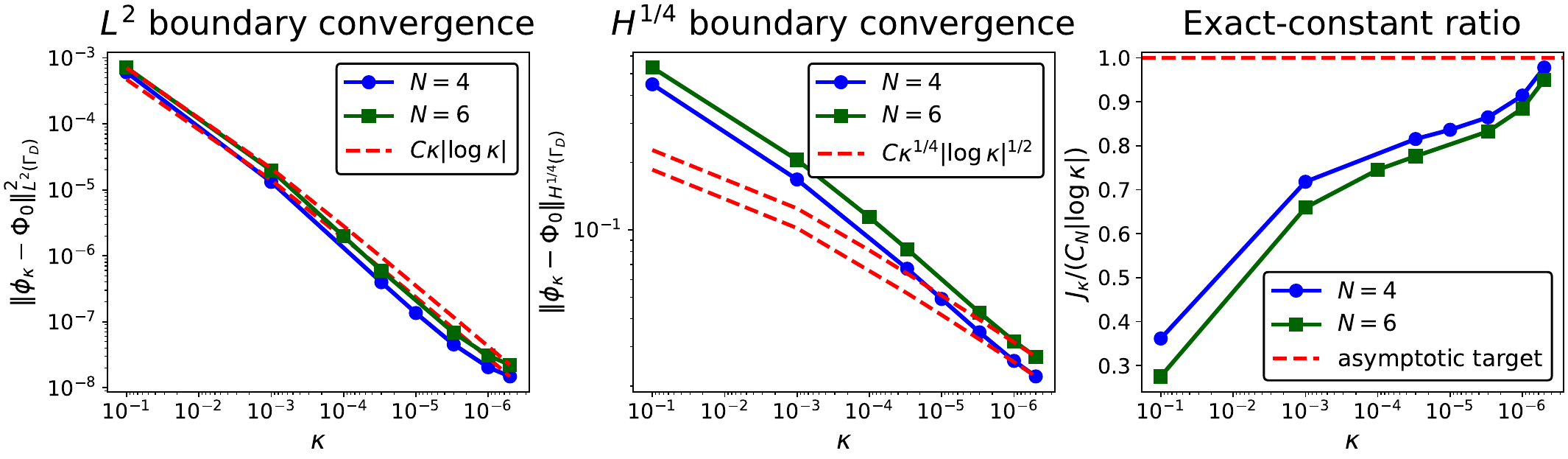}
  \caption{Corrugated four- and six-jump diagnostics with one-layer ZrO$_2$-coated stainless-steel
  parameter ratios from \cite{PengJiShiWang2021}. The first two panels compare $L^2$ and $H^{1/4}$ boundary convergence for $N=4$ and $N=6$; the right panel shows that the energy ratios remain close to the same exact-constant normalization over the range.}
  \label{fig:practical_stainless_zro2_diagnostics}
\end{paperfigure}

\subsection{Mesh rule}\label{subsec:numerics_mesh}

The mesh rule suggests $h_{\min}\simeq\kappa$ near each smooth jump point. We use the single-jump rectangle for the
quantitative mesh-comparison errors and the corrugated four-jump geometry for the mesh visualization.

For each target conductivity we compare three meshes:
\begin{itemize}
\item \textbf{Uniform ($N$ nodes)}: uniform mesh with the same node count as the graded mesh.
\item \textbf{Uniform ($4N$ nodes)}: uniform mesh with roughly four times as many nodes.
\item \textbf{Graded ($N$ nodes)}: local point refinement centered at the boundary jump, with
$h_{\min}=0.25\,\kappa$.
\end{itemize}
More precisely, if $z_j=(x_j,0)$ is a boundary jump point, the numerical graded mesh adds the points
\[
z_{j,m,\ell}=(x_j+d_m\cos\theta_\ell,\ d_m\sin\theta_\ell),
\qquad
d_m=h_{\min}m^2,\qquad
\theta_\ell=\frac{\ell\pi}{16},
\]
for all $m\ge1$ such that $d_m\le R_{\rm num}=4.0\times10^{-3}\,\mathrm{m}$ and for
$\ell=0,\ldots,16$, retaining only points in the domain. These points are added to a coarse background grid and
then triangulated.

Figure~\ref{fig:mesh_strategy_triptych} shows the same corrugated four-junction geometry on a coarser mesh, so that the
grading pattern is visible. Both displayed meshes use $N=435$ nodes. The left panel uses uniform $x$- and
$y$-coordinates, while the right panel uses local pointwise refinement around the boundary junction points
$z_j=(x_j,0)$. With $R=1.6\times10^{-3}\,\mathrm{m}$, $M=7$, $L=11$, and $q=2$, set
\[
x_j\in\{0.0036,\ 0.0078,\ 0.0120,\ 0.0162\}\,\mathrm{m},
\qquad
d_m=R\left(\frac{m}{M}\right)^q,\qquad
\theta_\ell=\frac{\ell\pi}{L}.
\]
The local refinement points are
\[
z_{j,m,\ell}=(x_j+d_m\cos\theta_\ell,\ d_m\sin\theta_\ell),
\qquad m=1,\ldots,M,\quad \ell=0,\ldots,L,
\]
retaining only points in $(0,L_x)\times[0,L_y]$. They are added to a coarse background grid and triangulated in the
reference rectangle. For the corrugated domain, the resulting reference mesh is mapped by
\[
(x,y)\longmapsto \left(x,\frac{y}{L_y}H(x)\right),
\]
with $H$ as in Subsection~\ref{subsec:numerics_multi_corrugated}. This places the smallest elements near the points
$(x_j,0)$, not along the full vertical lines $x=x_j$.

\begin{paperfigure}[htbp]
  \centering
  \includegraphics[width=0.92\textwidth]{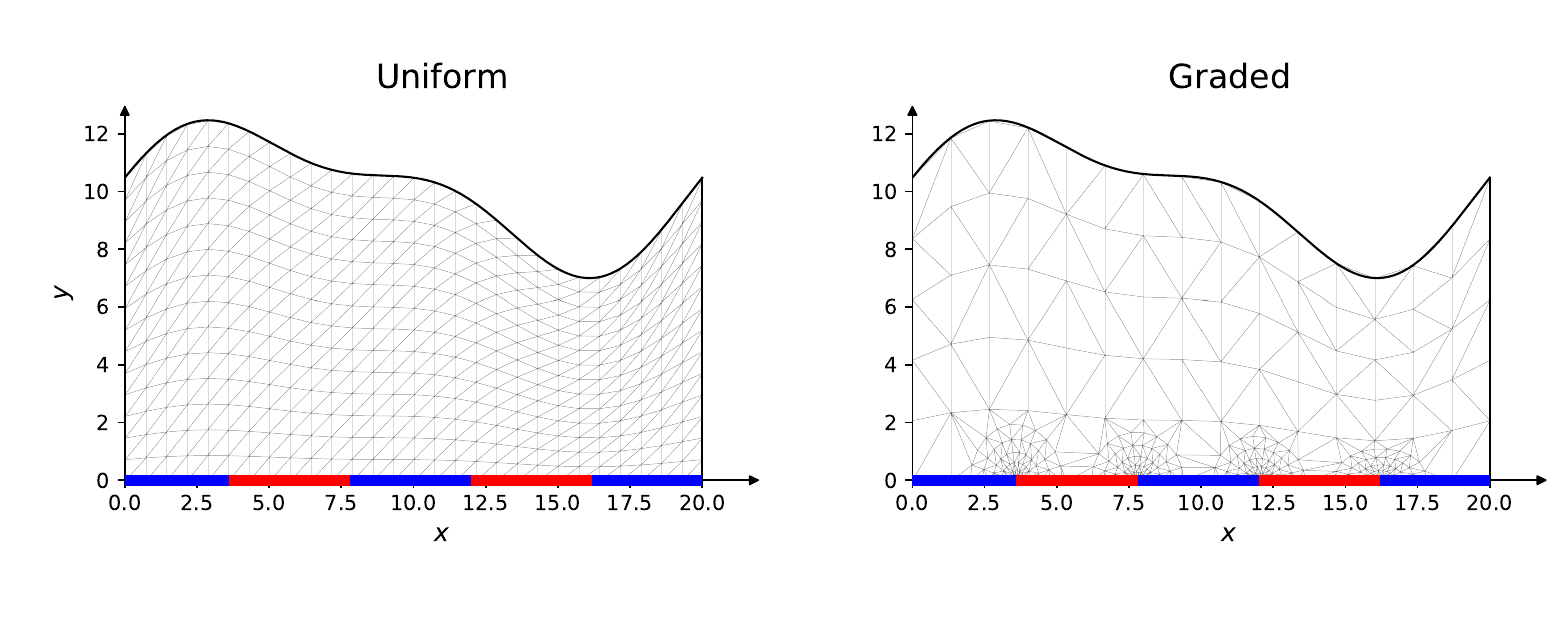}
  \caption{Coarsened mesh visualization for the corrugated four-junction geometry. The blue and red boundary segments mark
  the two prescribed values of $\Phi_0$; the graded mesh concentrates elements near the cathode-anode junctions.}
  \label{fig:mesh_strategy_triptych}
\end{paperfigure}

In the single-jump rectangle, errors are measured against a reference solution
$\phi_{\kappa,h_*}$ computed on the same problem with $h_{\min}=0.05\,\kappa$ at the
boundary jump.

\begin{paperfigure}[htbp]
  \centering
  \includegraphics[width=1.0\textwidth]{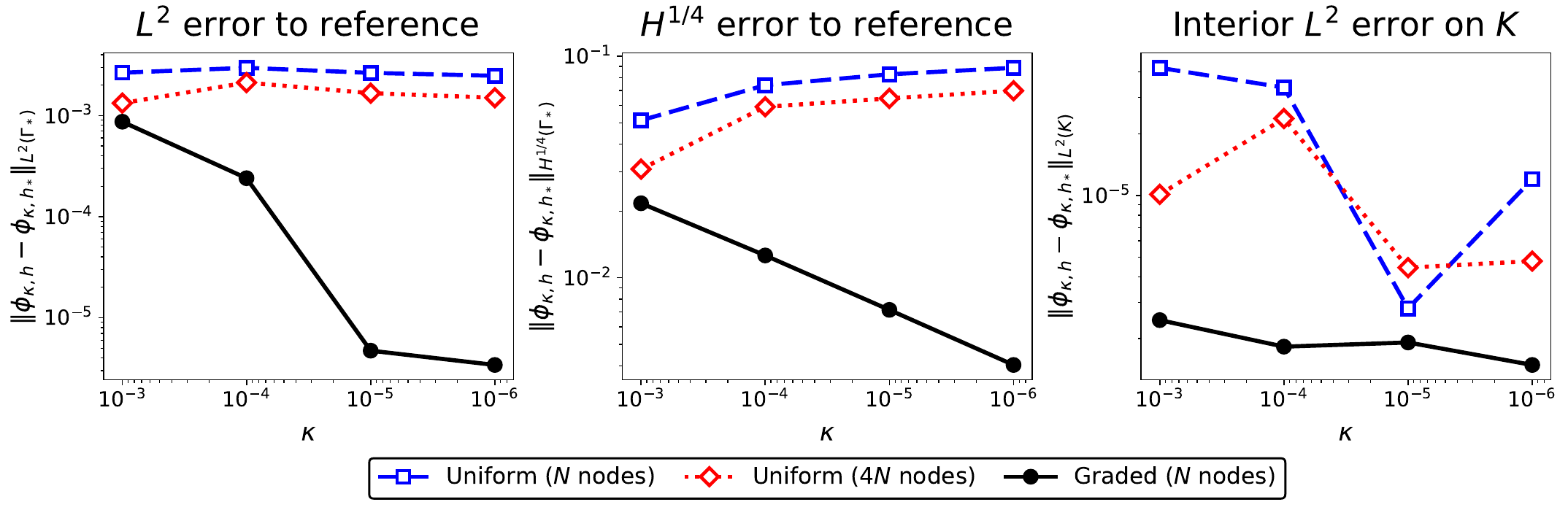}
  \caption{Single-jump mesh comparison. \textbf{Left}: boundary $L^2$ error
  $\|\phi_{\kappa,h}-\phi_{\kappa,h_*}\|_{L^2(\Gamma_*)}$ against an over-resolved
  graded reference. \textbf{Middle}: boundary $H^{1/4}$ error against the same reference. \textbf{Right}: interior $L^2$ error on the inset rectangle $K$. The
  graded mesh (black, solid) matches or beats the $4N$-uniform mesh
  (red, dotted) on all three diagnostics while using the same node budget as the
  $N$-uniform mesh (blue, dashed).}
  \label{fig:mesh_comparison_refined}
\end{paperfigure}

Figure~\ref{fig:mesh_comparison_refined} shows the payoff of grading. At $\kappa=10^{-5}$ the graded mesh uses $930$ nodes and reaches boundary error $4.7\times10^{-6}$, while the $4N$ uniform mesh uses $3{,}655$ nodes and remains near $1.7\times10^{-3}$. The energy ratio behaves similarly: it stays in the predicted band $0.59$--$0.77$ on graded meshes but already rises to about $18$ and $9$ on the two uniform meshes at $\kappa=10^{-5}$.
 % !TEX root = ../main.tex
\section{Conclusions and future directions}\label{sec:conclusion}

This work shows that the low-conductivity singular limit is controlled by a boundary layer localized at points where
$\Gamma_a$ and $\Gamma_c$ meet. The stiff boundary reaction drives the trace on $\Gamma_a\cup\Gamma_c$ toward the piecewise equilibrium state, and the
interior potential is then selected by the mixed harmonic field associated with that limiting boundary datum. The central
point is that the leading energy growth is not a global feature of the whole domain geometry, but a local
logarithmic contribution from each smooth boundary jump.

The supporting analysis separates what is standard from what is singular. Solvability and regularity follow from
trace-critical variational estimates and classical elliptic theory, while the singular behavior comes from the
borderline cost of smoothing a boundary jump.

The numerical experiments are designed around this distinction. They
display the formation of the trace layer, compare the nonlinear solutions with the harmonic limit, test the boundary
and interior convergence diagnostics, and show the same mechanism
in a corrugated geometry with several boundary jumps.

We close by highlighting several directions in which the present analysis can be extended, and which we plan to
investigate in future work.

\begin{itemize}
\item \textbf{Variable conductivity.} Spatially varying conductivity would couple the local jump energy to an inhomogeneous
interior metric and could change both the local cell problem and the harmonic selection principle. The most interesting
regime is one in which the conductivity varies on comparable scales to the reaction layer, since then homogenization
and jump asymptotics would no longer decouple cleanly.

\item \textbf{Singular perturbations of the domain and moving boundaries.} Since the leading constant is local at the
cathode-anode junctions, it should be stable under smooth perturbations of the domain and of the reaction partition.
A quantitative version of this stability would be useful for corrosion models in which the boundary evolves and the
elliptic problem is solved on a slowly varying sequence of domains \cite{VogeliusXu1998,KavianVogelius2003}.

\item \textbf{Mixed and nonsmooth geometries.} It would be useful to remove or weaken the separation assumption in the
mixed problem, where points of $\overline{\Gamma_a}\cap\overline{\Gamma_c}$ are kept away from changes of boundary
type. A related extension is to allow the cathode-anode junction itself to lie at a corner. In a wedge of opening
angle $\alpha$, the local angular profile suggests the leading contribution
\[
\frac{(\phi_c-\phi_a)^2}{2\alpha}|\log\kappa|
\]
per corner junction, replacing the smooth-boundary denominator $2\pi$. Thus nonsmooth junctions should be governed by
the same boundary-layer mechanism, but with the local angle entering the exact constant.
\end{itemize}
 
\section*{Acknowledgments}
The author acknowledges funding from Schaeffler AG \& Co. KG. The author thanks Enrique Zuazua for insightful discussions.

\clearpage
\appendix
% !TEX root = ../main.tex
\section{Modeling reduction and nondimensional conductivity}\label{app:modeling}

This appendix records the reduced electrochemical scaling behind \eqref{eq:intro_model_pde}. It is not used in the proofs, but it
fixes the physical meaning of the small parameter.

The stationary reduction assumes constant conductivity, electroneutral transport, negligible concentration
polarization, and a fixed boundary partition, as in activation-controlled potential models for galvanic corrosion
\cite{NewmanThomasAlyea2004,Rubinstein1990}.

Let $\Phi_{\rm phys}$ be the dimensional electrolyte potential. In a well-mixed electrolyte with electroneutrality in the electrolyte interior
and negligible concentration polarization, Ohm's law gives the current density
\[
j=-\sigma \nabla \Phi_{\rm phys},
\]
where $\sigma>0$ is the electrolyte conductivity. Charge conservation gives
\[
\operatorname{div} j=0,
\qquad\text{hence}\qquad
-\Delta \Phi_{\rm phys}=0
\]
when $\sigma$ is constant. On an insulating boundary component, $j\cdot\nu=0$. On a reactive metal-electrolyte
interface, the normal current is modeled by a signed exponential current-overpotential law
\[
j\cdot\nu=i_{*,0}\left[
\exp\!\left(\frac{(1-\alpha)zF(\Phi_{\rm phys}-E_*)}{RT}\right)
-\exp\!\left(-\frac{\alpha zF(\Phi_{\rm phys}-E_*)}{RT}\right)
\right],
\]
with the equilibrium potential $E_*$ and exchange current $i_{*,0}$ chosen according to the reactive process. Here
$\alpha\in(0,1)$ is the transfer coefficient, $z$ is the charge number, $F$ is Faraday's
constant, $R$ is the gas constant, and $T$ is the absolute temperature.

Choose a length scale $L$ and a potential scale $V$. Write
\[
x=L\widehat x,\qquad
\Phi_{\rm phys}=V\phi .
\]
Then the boundary condition becomes
\[
\partial_{\widehat\nu}\phi
=-\frac{L i_{*,0}}{\sigma V}\,\widehat i_*(\phi),
\]
up to the sign convention for the outward normal and with $\widehat i_*$ denoting the corresponding dimensionless
signed exponential current. After absorbing fixed exchange-current ratios and thermal-voltage constants into the
dimensionless currents $i_a$ and $i_c$, the common conductivity parameter used in the paper is
\[
\kappa:=\frac{\sigma V}{L i_{\rm ref}},
\]
where $i_{\rm ref}$ is the reference exchange-current scale. Thus the low-conductivity regime
$\kappa\to0^+$ corresponds either to decreasing electrolyte conductivity, increasing device length, or increasing
interfacial exchange-current scale relative to Ohmic transport through the electrolyte interior. In this regime the dimensionless model takes the
form
\[
\partial_\nu\phi=-\frac{i_a(\phi)}{\kappa}\quad\text{on }\Gamma_a,
\qquad
\partial_\nu\phi=-\frac{i_c(\phi)}{\kappa}\quad\text{on }\Gamma_c,
\]
which is \eqref{eq:intro_model_pde}. The mathematical analysis in the paper treats this reduced stationary problem; concentration
dynamics, space-charge layers, and moving-interface effects are outside the model and are listed as extensions in
Section~\ref{sec:conclusion}.
 
\end{document}